\theoremstyle{plain} \newtheorem{theorem}{Theorem}[section]
\newtheorem{lemma}[theorem]{Lemma}
\newtheorem{corollary}[theorem]{Corollary}
\newtheorem{proposition}[theorem]{Proposition}
 \theoremstyle{definition}
\newtheorem{definition}[theorem]{Definition}
\newtheorem{remark}[theorem]{Remark}
\newtheorem{example}[theorem]{Example}
\newtheorem{assumption}[theorem]{Assumption}
\def\r2n{\re^n\times (\re^n \setminus \{ 0\})}
\newcommand{\re}{\mathbb{R}}
\newcommand{\com}{\mathbb{C}}
\newcommand{\na}{\mathbb{N}}
\newcommand{\calS}{\mathcal{S}}
\def\dbar{\mbox{\setbox0=\hbox{$d$}$d$\kern-.75\wd0\vbox{%
\hrule height.1ex width.80\wd0\kern1.2ex}}}
\title{
Wave front set of solutions to Schr\"odinger equations with 
perturbed harmonic oscillators
}
\author{Shingo Ito and Keiichi Kato}
\date{}
\begin{document}
\maketitle
{\centering\footnotesize Dedicated to Professor Kenji Yajima on his seventieth birthday.\par}
\abstract{
 In this paper, we determine the
wave front sets of solutions to  Schr\"odinger equations 
of a harmonic oscillator with sub-quadratic perturbation 
by using  the representation of the Schr\"odinger
evolution operator of a harmonic oscillator 
introduced in Kato, Kobayashi and Ito (2012)
via the wave packet transform. 
In the previous paper (2011, 2012),
the authors have studied the wave front sets for Schr\"odinger equations
with a free particle and a harmonic oscillator and Schr\"odinger equations
with sub-quadratic potential. 
}

\section{Introduction}
In this paper, we consider the following initial value problem 
of the Schr\"odinger equations of a harmonic oscillator with
sub-quadratic perturbation,
\begin{equation}
\begin{cases}
 i \partial_t u + \frac{1}{2}\triangle u -\frac{1}{2}|x|^2u-v(t,x)u = 0,  &(t,x) \in \re \times 
 \re^n, \\
u(0,x)  = u_0(x), & x \in \re^n,
\end{cases}
\label{LS}
\end{equation}
where $i = \sqrt{-1}$, $u:\re\times \re^n \rightarrow \com$, 
$\triangle = \sum_{j=1}^{n}\frac{\partial^2}{\partial x_j^2}$
and $v(t,x)$ is a real valued function.
\par
The aim of this note is to determine  the wave front sets of solutions 
of the initial value problem of the Schr\"odinger equations \eqref{LS} 
with sub-quadratic perturbation $v(t,x)$
by using 
the representation of the Schr\"odinger evolution operator of harmonic oscillator introduced in \cite{K-K-I-1}
 via the wave packet transform which is defined by A.~C\'ordoba and C.~Fefferman
\cite{C-F-1}. 
Wave packet transform is almost the same transform as the ones
which are known as  short time
Fourier transform (\cite{Grochenig-1}) 
or F. B. I. transform (\cite{Delort-1}). 
\par
Although singularities of solutions to (strictly) hyperbolic equations propagate along the associate Hamilton flow, singularities of solutions to Schr\"odinger equations in general go to the infinity immediately as time goes by and singularities may suddenly come from the infinity. 
In 1995, K. Yajima has conjectured that singularities of solutions to 
Schr\"odinger equations with potentials of quadratic growth propagate 
along the limit of the classical orbit as its energy tends to $\infty$(\cite{Yajima-0}). Our result here is a partial answer to Yajima's conjecture. 
Our main theorem shows that singularities of solutions to Schr\"odinger equations with perturbed harmonic oscillators move along the limit of the classical orbit as its energy tends to $\infty$. 
\par
The precise assumption on the perturbation $v(t,x)$ is the following. 
\begin{assumption}
 \label{ass-V}
$v(t,x)$ is a real valued function in $C^\infty (\re\times \re^n)$ and 
there exists a real number $\rho$ with
$0\le \rho <2$ such that for all multi-indices $\alpha $, 
there exists  $C_\alpha >0$ satisfying 
$$
\left|\partial^\alpha_x v(t,x)\right| \le C_\alpha (1+|x|)^{\rho -|\alpha |}
$$
for all $(t,x)\in \re \times \re^n$. 
\end{assumption}
\par

Let $\varphi\in\calS (\re^n)\backslash \{0\}$ and $f\in\calS'(\re^n)$. 
We define the wave packet transform $W_{\varphi} f(x,\xi )$ of $f$
 with the wave packet generated by a function $\varphi$ as follows: 
    \begin{align*}
     W_{\varphi}f(x,\xi )
     =
     \int_{\re^n} \overline{\varphi (y-x)}f(y)e^{-iy\cdot\xi}dy, 
     \quad x, \xi \in \re^n. 
     \end{align*}
Let $F$ be a function on
 ${\mathbb R}^n \times {\mathbb R}^n$. Then the formal adjoint
 operator $W^{*}_\varphi$ of $W_\varphi$ is defined by
    $$W^{*}_\varphi F(x)=\iint_{{\mathbb R}^{2n}} F(y,\xi)
    \varphi(x-y)  e^{ix \cdot \xi} dy d\xi. $$
It is known that for $\varphi,
 \psi \in {\mathcal S}({\mathbb R}^n)\backslash\{ 0\}$ satisfying
 $\langle \psi,\varphi \rangle \not=0$, we have the inversion formula
 (\cite[Corollary 11.2.7]{Grochenig-1})
    \begin{equation*}
    \frac{1}{(2\pi )^n\langle \psi,\varphi \rangle} W_\psi^{*} W_\varphi f = f,
    \quad f \in {\mathcal S}^\prime({\mathbb R}^n). \label{inversion formula}
    \end{equation*}

For the sake of convenience, we use the following notation
    \begin{align*}
    W_{\varphi(s)}u(t,x,\xi)
    =W_{\varphi(s,\cdot)}[u(t,\cdot)](x,\xi)
    =\int_{\re^n}\overline{\varphi (s,y-x)}u(t,y)e^{-iy\cdot \xi}dy,
    \end{align*}
 where  
 $\varphi (t,x)$ and $u(t,x)$ are functions on  $\re\times\re^n$.

 The authors have given a representation of 
the Schr\"odinger evolution 
operator of a free particle in the previous paper {\cite{K-K-I-1}}:
\begin{equation}
  W_{\varphi (t)}u(t, x,\xi ) = 
 e^{-\frac{i}{2}t|\xi |^2}
 W_{\varphi_0}u_0(x-\xi t,\xi ) , 
\label{RPLS}
\end{equation}
where $\varphi (t) = \varphi
(t,x)=e^{i(t/2)\triangle}\varphi_0(x)$
with $\varphi_0 (x)\in \calS (\re^n)\backslash \{0\}$.
This representation is introduced in the section 3. 
\par

In order to state our results precisely, we prepare several notations. 
For $\varphi_0 (x) \in \calS (\re^n)\backslash \{0\}$ and $0<b<1$,
 we put $\varphi_{0,\lambda} (x)=\lambda^{nb/2}\varphi_0(\lambda^b x)$
 and $\varphi_{\lambda}(t, x) = U(t)\varphi_{0,\lambda}(x)=
 e^{i(t/2)(\triangle -|x|^2)} \varphi_{0,\lambda} (x)$
 i.e. $\varphi_{\lambda}(t,x)$ is a solution of (\ref{LS}) with
 $v\equiv 0$ and $u_0=\varphi_{0,\lambda}$.
For $\xi_0 \in \re^n\setminus\{0\}$, we call a subset $\Gamma$ of
 $\re^{n}$ a conic neighborhood of $\xi_0$ 
 if $\xi \in \Gamma $ and $\alpha >0$ implies $\alpha \xi \in \Gamma$. 
For $\lambda \ge 1$, $t_0\in\re$ and $(x,\xi ) \in \re^n \times \re^n$, 
 $x(s)=x(s;t_0,x,\lambda \xi)$ and $\xi(s)=\xi (s;t_0,x,\lambda \xi)$
 denote the solutions to 
    \begin{equation}
    \label{ODE}
    \begin{cases}
     \dot{x}(s) = \xi (s), \quad &x(t_0) = x, \\
     \dot{\xi}(s) =-x(s)-\nabla v(s,x(s)), \quad &\xi (t_0) = \lambda \xi .
    \end{cases}
    \end{equation}
The following theorem is our main result. 

\begin{theorem}
\label{maintheorem}  
Let $u_0(x) \in L^2 (\re^n)$, $u(t,x)$ be a solution of \eqref{LS}
 in $C(\re ; L^2(\re^n))$ and $0<b<{\rm min}(1/2,(2-\rho)/2)$.
Fix $t_0\in\re$.
Under the assumption \ref{ass-V}, the following conditions are equivalent:
\begin{itemize}
 \item[(i)] $(x_0,\xi_0)\notin WF(u(t_0,\cdot ))$.
 \item[(ii)] There exist a neighborhood $K$ of $x_0$ and a conic
 neighborhood $\Gamma$ of $\xi_0$ such that for all $N\in \na$, 
 $a\ge 1$ and $\varphi_0(x)\in \calS (\re^n )\backslash \{0\}$,
 there exists a constant $C_{N,a, \varphi_0}>0$ satisfying 
    \begin{equation}
    \label{main-1}
    |W_{\varphi_\lambda(-t_0)}u_0(x(0;t_0,x,\lambda\xi ),
    \xi (0;t_0,x,\lambda\xi ) )| 
    \le
    C_{N,a,\varphi_0} \lambda^{-N}
    \end{equation}
 for all $\lambda \ge 1$, $x\in K$ and $\xi\in\Gamma$ with
 $a^{-1} \le |\xi |\le a$.
 \item[(iii)] There exist $\varphi_0\in\calS (\re^n)\setminus\{ 0\}$,
	      a neighborhood $K$ of $x_0$ and a conic
 neighborhood $\Gamma$ of $\xi_0$ such that for all $N\in \na$ and 
 $a\ge 1$, there exists a constant $C_{N,a, \varphi_0}>0$ satisfying 
    \begin{equation}
    \label{main-1}
    |W_{\varphi_\lambda(-t_0)}u_0(x(0;t_0,x,\lambda\xi ),
    \xi (0;t_0,x,\lambda\xi ) )| 
    \le
    C_{N,a,\varphi_0} \lambda^{-N}
    \end{equation}
 for all $\lambda \ge 1$, $x\in K$ and $\xi\in\Gamma$ with
 $a^{-1} \le |\xi |\le a$.
\end{itemize}
In the above condition, 
 $W_{\varphi_\lambda{(-t_0)}}u_0(x,\xi )$ denotes the wave packet transform 
of $u_0(x)$ with a wave packet $\varphi_\lambda (-t_0,x)$. 
\end{theorem}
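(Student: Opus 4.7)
The plan is to combine the known characterization of the wave front set via wave packet transforms with dilated wave packets (used by the authors in their previous papers, cf.\ \cite{K-K-I-1}) with an approximate representation formula for the evolution operator of the perturbed harmonic oscillator along the classical Hamilton flow associated to \eqref{ODE}. Before anything else, I would recall the lemma saying that $(x_0,\xi_0)\notin WF(f)$ is equivalent to the existence of a neighborhood $K$ of $x_0$ and a conic neighborhood $\Gamma$ of $\xi_0$ such that
\begin{equation*}
|W_{\varphi_{0,\lambda}}f(x,\lambda\xi)| \le C_{N,a,\varphi_0}\lambda^{-N}
\end{equation*}
for all $N$, $\lambda\ge 1$, $x\in K$, $\xi\in\Gamma$ with $a^{-1}\le|\xi|\le a$, and that this characterization is independent of the choice of $\varphi_0\in\calS\setminus\{0\}$. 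This last fact already yields \textbf{(ii)}$\Rightarrow$\textbf{(iii)} for free and will give the reverse after the main representation is established.

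The core of the proof is a representation of the form
\begin{equation*}
W_{\varphi_{\lambda}(t_0)}u(t_0,x,\lambda\xi)
= e^{i\Phi(t_0,x,\lambda\xi)}\,
W_{\varphi_\lambda(-t_0)}u_0\bigl(x(0;t_0,x,\lambda\xi),\xi(0;t_0,x,\lambda\xi)\bigr)
+ R_\lambda(t_0,x,\xi),
\end{equation*}
where $\Phi$ is a real phase and $R_\lambda$ is a remainder that is $O(\lambda^{-N})$ for every $N$ uniformly on the relevant set. To obtain this, I would first invoke the representation of the harmonic oscillator evolution introduced in \cite{K-K-I-1} (the harmonic analogue of the formula (\ref{RPLS})), which already transports the wave packet transform along the free harmonic flow. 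Then I would insert the perturbation $v(t,x)u$ via Duhamel's formula
\begin{equation*}
u(t_0) = U_{\mathrm{HO}}(t_0)u_0 -i\int_0^{t_0}U_{\mathrm{HO}}(t_0-s)\bigl(v(s,\cdot)u(s,\cdot)\bigr)\,ds,
\end{equation*}
apply the wave packet transform, and rewrite the result along the perturbed Hamiltonian flow \eqref{ODE}, so that the leading part becomes a wave packet transform of $u_0$ evaluated at the time-zero image of $(x,\lambda\xi)$ under the full flow.

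The main technical step, and the principal obstacle, is controlling the Duhamel remainder $R_\lambda$. Here the sub-quadratic bound $|\partial_x^\alpha v(t,x)|\le C_\alpha(1+|x|)^{\rho-|\alpha|}$ with $\rho<2$ enters crucially, together with the scaling exponent $b$. The wave packet $\varphi_{0,\lambda}$ concentrates at spatial scale $\lambda^{-b}$ and frequency scale $\lambda^{b}$ around the point $x(s;t_0,x,\lambda\xi)$ with momentum $\xi(s;t_0,x,\lambda\xi)\sim\lambda$; comparing the exact Hamilton flow with the quadratic one, the resulting phase difference is governed by $\partial_x^2 v$ and powers of $|x(s)|\le C\lambda$, producing a factor of order $\lambda^{\rho-2}$ or $\lambda^{2b-1}$ in the symbolic weight, both of which tend to zero under the assumption $0<b<\min(1/2,(2-\rho)/2)$. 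Iterated integration by parts in the wave packet variables, coupled with the $L^2$ boundedness of $W_{\varphi_\lambda}$, will then give the $O(\lambda^{-N})$ decay of $R_\lambda$ for every $N$; this is the step requiring the most bookkeeping.

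Once the representation is established, \textbf{(i)}$\Leftrightarrow$\textbf{(ii)} follows by composing the representation with the wave packet characterization of $WF(u(t_0,\cdot))$: the main term is rapidly decreasing if and only if condition (\ref{main-1}) holds, while $R_\lambda$ is harmless. The implications \textbf{(ii)}$\Rightarrow$\textbf{(iii)} is immediate by specialization, and \textbf{(iii)}$\Rightarrow$\textbf{(ii)} follows from the $\varphi_0$-independence of the wave packet characterization of $WF$, applied through the representation formula in both directions. This completes the proof outline.
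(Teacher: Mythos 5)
Your overall strategy---transporting the wave packet transform along the classical flow \eqref{ODE} and then invoking the Folland-type characterization (Proposition \ref{folland-lemma}) of the wave front set---is the same as the paper's. But there is a genuine gap in the central claim that the remainder $R_\lambda$ in your representation is $O(\lambda^{-N})$ for every $N$ in a single step. If you insert the perturbation by Duhamel with the harmonic-oscillator propagator, the error term contains $v(s,\cdot)u(s,\cdot)$ with $u(s)$ only in $L^2$ and $v$ growing like $|x|^\rho$, and no amount of integration by parts in the wave packet variables makes this rapidly decreasing on its own: the remainder is controlled only \emph{relative to} the quantity you are trying to estimate, namely $W_{\varphi_\lambda(s-t_0)}u(s,\cdot)$ at intermediate times. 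The paper handles this by Taylor-expanding $v$ about the wave-packet center $x$: the zeroth- and first-order terms are absorbed into the transport operator (which is exactly why the characteristics become the perturbed flow \eqref{ODE}), and only the terms with $|\alpha|\ge 2$ remain in $R_L$ (see \eqref{WPEQP} and \eqref{Int-eq-Sch}). Even then, Lemma \ref{lem-4.1} shows that these terms gain only a fixed small power $\lambda^{-\delta}$ with $\delta=\min(2-2b-\rho,\,2b)$ over the assumed bound on $W_{\varphi_\lambda}u$ along the flow, which is precisely where the hypothesis $0<b<\min(1/2,(2-\rho)/2)$ enters.

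Consequently the proof cannot close in one pass; it requires the bootstrap the paper runs: the assertion $P(\sigma)$ that $|W_{\varphi_\lambda(t-t_0)}u(t,x(t),\xi(t))|\le C\lambda^{-\sigma}$ uniformly for $0\le t\le t_0$, with $P(0)$ coming from $L^2$-conservation and $P(\sigma)\Rightarrow P(\sigma+\delta)$ coming from the integral equation together with Lemmas 4.1 and 4.2 (the top-order Taylor remainder $R_2$ is genuinely $O(\lambda^{-N})$ for $L$ large, but the intermediate terms $R_1$ are not). Your heuristic that the phase discrepancy carries a factor $\lambda^{\rho-2}$ or $\lambda^{2b-1}$ ``tending to zero'' correctly identifies the size of the one-step gain, but a single vanishing factor does not yield $O(\lambda^{-\infty})$; you must iterate the integral inequality, and to do so you need the inductive hypothesis to hold for all intermediate times $s\in[0,t_0]$, not just at $t=t_0$. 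Adding this induction (and replacing the plain Duhamel step by the Taylor-expansion mechanism that produces the perturbed characteristics) is what is needed to make your outline into a proof.
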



\begin{remark}
 Assume that $u_0(x)\in H^{-s}(\re^n)$ for $s>0$ and 
$u(t,x)$ is in $C(\re ;H^{-s}(\re^n) )$ be a solution of 
\eqref{LS}, the assertion of Theorem \ref{maintheorem} is still valid 
for this case. 
\end{remark}

\begin{remark}
The authors have determined the wave front sets 
of solutions to Schr\"odinger equations of a free particle 
and a harmonic oscillator in \cite{K-K-I-2} and 
have determined the wave front sets of solutions to 
Schr\"odinger equations with sub-quadratic potential in \cite{K-K-I-4}. 
\end{remark}

\begin{remark}
In one space dimension, 
K.~Yajima \cite{Yajima-1} shows that
 the fundamental solution of Schr\"odinger equations with super
 quadratic potential has singularities everywhere. 
\end{remark}

In the following corollaries, we assume that $v(t,x)$ satisfies
 Assumption \ref{ass-V} and $u(t,x)$ is a solution
 of \eqref{LS} in $C(\re;L^2(\re^n))$.

\begin{corollary} 
If $0\le \rho <1$ and $0<b<\min (1/2,1-\rho )$, then
 $(x_0,\xi_0) \notin WF(u(t_0,\cdot))$ if and only
 if there exist a neighborhood $K$ of $x_0$ and a conic neighborhood
 $\Gamma$ of $\xi_0$ such that for all $N\in \na$, $a\ge 1$ and 
 $\varphi_0(x)\in \calS (\re^n ) \backslash \{0\}$, 
there exists a constant $C_{N,a,\varphi_0}>0$ satisfying 
    \begin{align*}
    |W_{\varphi_\lambda (-t_0)}
    u_0(x \cos t_0 - \lambda\xi\sin t_0,\lambda \xi \cos t_0+x\sin t_0 )| 
    \le
    C_{N,a,\varphi_0} \lambda^{-N}
    \end{align*}
 for all $\lambda \ge 1$, $x\in K$ and $\xi\in\Gamma$ with
 $a^{-1} \le |\xi |\le a$.
\label{cor-1.6}
\end{corollary}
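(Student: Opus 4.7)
The strategy is to deduce the corollary from Theorem \ref{maintheorem} by observing that when $\rho<1$ the force $\nabla v$ grows sublinearly, so the classical orbit of the perturbed system approaches that of the free harmonic oscillator in the high-frequency regime $\lambda\to\infty$. Explicitly, for $v\equiv 0$, the ODE \eqref{ODE} is solved by the rotation in phase space, and at $s=0$ one has exactly
\[
x^\flat(0)=x\cos t_0-\lambda\xi\sin t_0,\qquad \xi^\flat(0)=x\sin t_0+\lambda\xi\cos t_0,
\]
which matches the displayed arguments in the corollary. Therefore it suffices to show that conditions (ii) and (iii) of Theorem \ref{maintheorem}, stated in terms of the perturbed orbit point $(x(0;t_0,x,\lambda\xi),\xi(0;t_0,x,\lambda\xi))$, are equivalent to the same decay condition stated in terms of $(x^\flat(0),\xi^\flat(0))$.

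For this, I would first prove the orbit comparison: on the bounded time interval between $0$ and $t_0$, and for $x$ in a bounded set and $|\xi|\in[a^{-1},a]$, the a priori estimate $|x(s)|+|\xi(s)|\lesssim \lambda$ is standard, and so by Assumption \ref{ass-V} one has $|\nabla v(s,x(s))|\le C(1+|x(s)|)^{\rho-1}\le C\lambda^{\rho-1}$. Writing the perturbed system as the unperturbed one plus a forcing by $-\nabla v$ and applying Duhamel together with Gronwall then yields
\[
|x(0;t_0,x,\lambda\xi)-x^\flat(0)|+|\xi(0;t_0,x,\lambda\xi)-\xi^\flat(0)|\le C\,\lambda^{\rho-1},
\]
uniformly over the relevant $x,\xi$. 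Since $b<1-\rho$, this error is smaller than $\lambda^{-b}$, which is precisely the spatial resolution scale of the rescaled wave packet $\varphi_{0,\lambda}$.

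Next, I would establish a displacement lemma: if $(y^*,\eta^*)$ and $(y^\flat,\eta^\flat)$ are two points with $|y^*-y^\flat|+|\eta^*-\eta^\flat|\le C\lambda^{\rho-1}$, then
\[
|W_{\varphi_\lambda(-t_0)}u_0(y^*,\eta^*)|\le C_N\lambda^{-N}\quad\Longleftrightarrow\quad|W_{\varphi_\lambda(-t_0)}u_0(y^\flat,\eta^\flat)|\le C_N'\lambda^{-N}.
\]
To handle the evolved wave packet $\varphi_\lambda(-t_0)=U(-t_0)\varphi_{0,\lambda}$, I would use the metaplectic character of the harmonic oscillator: up to a unimodular phase, $W_{\varphi_\lambda(-t_0)}u_0(y,\eta)$ equals $W_{\varphi_{0,\lambda}}(U(t_0)u_0)$ evaluated at the phase space rotation $R_{-t_0}(y,\eta)$ (this is the harmonic-oscillator analogue of \eqref{RPLS} developed in section 3). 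Hence the displacement lemma reduces to a statement about the bare wave packet $\varphi_{0,\lambda}(x)=\lambda^{nb/2}\varphi_0(\lambda^b x)$, where the Schwartz decay of $\varphi_0$ and the inequalities $\lambda^{\rho-1}<\lambda^{-b}$ and $\lambda^{\rho-1}<1<\lambda^b$ make a shift of size $\lambda^{\rho-1}$ harmless at every level of rapid decay. Combining the orbit comparison with this lemma, and slightly enlarging $K$ and $\Gamma$ if needed so that image sets match up, yields the equivalence stated in the corollary.

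The main obstacle is the displacement lemma, specifically the bookkeeping of how the harmonic oscillator unitary affects the wave packet transform; once the metaplectic rotation identity reduces the argument to the unevolved Schwartz function $\varphi_{0,\lambda}$, the remaining estimates are straightforward Taylor/integral-by-parts arguments using the gap $1-\rho-b>0$.
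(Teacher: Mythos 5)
Your route is genuinely different from the paper's. The paper does not compare orbits at all: it re-derives the transformed equation, replacing \eqref{WPEQP} by a version \eqref{WPEQP2} whose characteristic vector field is that of the unperturbed harmonic oscillator (no $\nabla_x v\cdot\nabla_\xi$ term), so that the characteristics are exactly the rotation appearing in the corollary, and it absorbs the first-order Taylor term $(y-x)\cdot\nabla_x v(t,x)$ into the remainder $r_L$ (the sum now starting at $|\alpha|=1$). Since $|\partial_x^\alpha v(t,x)|\le C(1+|x|)^{\rho-1}$ with $\rho-1<0$ for $|\alpha|=1$, this extra remainder term gains a positive power of $\lambda$ exactly as in Lemma \ref{lem-4.1} (this is where $b<1-\rho$ enters), and the induction of Section 5 runs verbatim. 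That is a few lines once the machinery is in place, whereas your plan — treat Theorem \ref{maintheorem} as a black box, compare the perturbed orbit with the rotation, and transfer rapid decay across the resulting phase-space displacement — defers its two hardest steps.

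Concretely: (1) your justification of the orbit comparison is backwards. Since $\rho-1<0$, the a priori bound $|x(s)|\lesssim\lambda$ gives a \emph{lower} bound on $(1+|x(s)|)^{\rho-1}$, not an upper one; near $s=t_0$ one has $|x(s)|=O(1)$ and $|\nabla v(s,x(s))|$ is only $O(1)$. The estimate must be done on the time integral using \eqref{x(s)} and \eqref{ode-est}, which yields an error $O(\lambda^{-\min(2b,\,1-\rho)})$ (with a logarithm when $\rho=0$); this is still $o(\lambda^{-b})$, so the conclusion survives, but not by your argument. (2) The displacement lemma is the crux and is not straightforward in the $\eta$ variable: the naive bound $|e^{-iz\cdot\eta^*}-e^{-iz\cdot\eta^\flat}|\le |z|\,|\eta^*-\eta^\flat|$ is useless because the base point satisfies $|y|\sim\lambda$, producing an error $O(\lambda^{\rho})$. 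One must factor $e^{-iz\cdot h}=e^{-iy\cdot h}e^{-i(z-y)\cdot h}$, Taylor-expand to high order, and recognize each term as a wave packet transform with a new window $w^k\varphi_0$ or $\partial^k\varphi_0$; the argument closes only because the conditions are quantified over all $\varphi_0\in\calS(\re^n)\setminus\{0\}$, and the bookkeeping is of the same order as Lemma \ref{lem-4.1}, so nothing is saved. (A cleaner variant of your idea avoids the displacement lemma altogether: for large $\lambda$ the perturbed endpoint equals the rotation applied to some $(x',\lambda\xi')$ with $|x'-x|=o(1)$ and $|\xi'-\xi|=o(\lambda^{-1})$, so one trades the displacement for a slight enlargement of $K$ and $\Gamma$ plus an inverse-function argument.)
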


The corollary \ref{cor-1.6} says that the wave front set of solutions to 
Schr\"odinger equations \eqref{LS} with $\rho <1$ is exactly the same as the one of the solutions to \eqref{LS} with $v(t,x)\equiv 0$. That is, 
the wave front set of solutions to \eqref{LS} with $\rho <1$ is the same 
as the one of the solutions of harmonic oscillator with the same initial data. 

\begin{corollary}
Let $\rho =1$ and $0<b<1/2$.
Assume that $\nabla v (t,x)=v_0(t,x)+\tilde{v}(t,x)$, where
 $v_0(t,\lambda x)=v_0(t,x)$ for $\lambda \ge 1$ and
 $|\tilde{v}(t,x)|\le C(1+|x|)^{-\delta}$ ($\delta >0$).
If 
$(x_0,\xi_0)\notin WF(u(\pi,\cdot ))$,
 then there exists $\tilde{c}\in\re^n$ such that
 $(-x_0+\tilde{c}, -\xi_0) \notin WF (u_0)$.
 \label{cor-1.7}
\end{corollary}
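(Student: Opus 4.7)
The plan is to apply Theorem~\ref{maintheorem} at $t_0 = \pi$, extract the shift $\tilde c$ from the $\lambda \to \infty$ asymptotics of the classical trajectory, and then recognize the translated bound as the condition excluding $(-x_0 + \tilde c,-\xi_0)$ from $WF(u_0)$. By the equivalence (i) $\Leftrightarrow$ (ii) of Theorem~\ref{maintheorem} at $t_0 = \pi$, the hypothesis gives a neighborhood $K \times \Gamma$ of $(x_0,\xi_0)$ such that, for every $\varphi_0 \in \calS(\re^n)\setminus\{0\}$, $a \ge 1$ and $N \in \na$,
$$\bigl|W_{\varphi_\lambda(-\pi)} u_0\bigl(x(0;\pi,x,\lambda\xi),\, \xi(0;\pi,x,\lambda\xi)\bigr)\bigr| \le C_{N,a,\varphi_0}\lambda^{-N}$$
for all $\lambda \ge 1$ and $(x,\xi) \in K \times \Gamma$ with $a^{-1}\le|\xi|\le a$.

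To analyze the orbit I would decompose $x(s) = x_h(s) + y_\lambda(s)$, $\xi(s) = \xi_h(s) + \eta_\lambda(s)$, where $x_h(s) = -x\cos s - \lambda\xi\sin s$, $\xi_h(s) = x\sin s - \lambda\xi\cos s$ is the pure harmonic orbit through $(x,\lambda\xi)$ at $s=\pi$. The correction $(y_\lambda,\eta_\lambda)$ satisfies $\dot y_\lambda = \eta_\lambda$, $\dot\eta_\lambda = -y_\lambda - \nabla v(s, x_h(s) + y_\lambda(s))$ with vanishing data at $s = \pi$. Since $\rho = 1$, the forcing $\nabla v = v_0 + \tilde v$ is bounded, so $(y_\lambda,\eta_\lambda)$ stays uniformly bounded in $\lambda$. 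On any compact subset of $(0,\pi)$ one has $|x_h(s) + y_\lambda(s)| \ge c\lambda\sin s - O(1) \ge 1$ for large $\lambda$, so the degree-zero homogeneity of $v_0$ gives $v_0(s, x_h + y_\lambda) \to v_0(s,-\xi/|\xi|)$ and $\tilde v \to 0$; the two $\epsilon$-neighborhoods of $s \in \{0,\pi\}$ contribute only $O(\epsilon)$ to the integrated forcing. Continuous dependence of ODE solutions and dominated convergence then give, uniformly in $(x,\xi)$ on the relevant set,
$$x(0;\pi,x,\lambda\xi) = -x + \tilde c_1(\xi/|\xi|) + o(1),\qquad \xi(0;\pi,x,\lambda\xi) = -\lambda\xi + O(1),$$
with $\tilde c_1(\omega) := -\int_0^\pi \sin s'\, v_0(s',-\omega)\,ds'$. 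Set $\tilde c := \tilde c_1(\xi_0/|\xi_0|)$.

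Define $F_\lambda(x,\xi) := (x(0;\pi,x,\lambda\xi),\, \xi(0;\pi,x,\lambda\xi)/\lambda)$. Linearising the ODE in initial data, and using $|\nabla^2 v| = O((1+|x|)^{-1}) = O(\lambda^{-1})$ on the bulk of $[0,\pi]$, one obtains $C^1$-convergence of $F_\lambda$ to the smooth diffeomorphism $F_\infty(x,\xi) = (-x + \tilde c_1(\xi/|\xi|),\, -\xi)$, which sends $(x_0,\xi_0)$ to $(-x_0+\tilde c,\, -\xi_0)$. The inverse function theorem (with $\lambda \ge \lambda_0$ as parameter) then realises $F_\lambda$ as a diffeomorphism onto a neighborhood $\tilde K \times \tilde\Gamma$ of $(-x_0+\tilde c,-\xi_0)$. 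Substituting $(y,\eta) = F_\lambda(x,\xi)$ into the bound above converts it into
$$\bigl|W_{\varphi_\lambda(-\pi)} u_0(y,\lambda\eta)\bigr| \le C_{N,a,\varphi_0}\lambda^{-N}$$
for $(y,\eta) \in \tilde K \times \tilde\Gamma$ with $a^{-1}\le|\eta|\le a$ and $\lambda \ge \lambda_0$; for $\lambda \in [1,\lambda_0]$ the bound is automatic by the $L^2$-boundedness of the wave packet transform ($u_0 \in L^2$). Applying Theorem~\ref{maintheorem} with $v \equiv 0$ at $t_0 = \pi$ (to the pure harmonic evolution $U(\cdot)u_0$) together with the harmonic propagation statement that $(x_0',\xi_0') \in WF(U(\pi)u_0)$ if and only if $(-x_0',-\xi_0') \in WF(u_0)$ identifies this bound with the condition $(-x_0+\tilde c,-\xi_0) \notin WF(u_0)$, completing the argument.

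The main obstacle is the uniform asymptotic analysis in the second paragraph: one must control the ODE on the $\epsilon$-neighborhoods of the turning points $s = 0,\pi$, where $|x_h|$ is only of order $1$ and the homogeneity of $v_0$ cannot be invoked, and simultaneously establish uniform $C^1$-convergence of $F_\lambda$ on the complement — all uniformly in $(x,\xi)$ over $K\times\Gamma$. Once those estimates are in hand, the inverse function theorem step and the final identification with $WF(u_0)$ via Theorem~\ref{maintheorem} for $v\equiv 0$ are routine.
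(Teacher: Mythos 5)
Your proposal is correct and follows the paper's strategy in all essentials: apply Theorem \ref{maintheorem} at $t_0=\pi$, use the formula $x(0;\pi,x,\lambda\xi)=-x-\int_0^\pi\sin\tau\,\nabla v(\tau,x(\tau))\,d\tau$ together with the splitting $\nabla v=v_0+\tilde v$ to get $x(0)\to -x+\tilde c_1(\xi/|\xi|)$ and $\xi(0)/\lambda\to-\xi$, and then read off the wave front condition for $u_0$ at $(-x_0+\tilde c,-\xi_0)$. The one step where you genuinely diverge is the final identification: the paper takes $\varphi_0=e^{-|x|^2/2}$, so that by the Hermite expansion $\varphi_\lambda(-\pi,\cdot)=e^{in\pi/2}\varphi_{0,\lambda}$, and the bound becomes literally the hypothesis of Proposition \ref{folland-lemma} for $u_0$; you instead keep a general window and pass through Theorem \ref{maintheorem} with $v\equiv0$ plus the harmonic-oscillator relation $WF(U(\pi)u_0)=\{(-x,-\xi):(x,\xi)\in WF(u_0)\}$. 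Both are valid; the Gaussian trick is shorter and avoids quoting an extra propagation fact. You are also more explicit than the paper about the surjectivity of $(x,\xi)\mapsto(x(0),\xi(0)/\lambda)$ onto a fixed neighborhood of $(-x_0+\tilde c,-\xi_0)$, which the paper leaves implicit but which is genuinely needed. One caution there: your claim of $C^1$-convergence via $|\mathrm{Hess}\,v(s,x(s))|=O(\lambda^{-1})$ is too quick, since $\partial_\xi x(s)=O(\lambda)$ exactly cancels that gain, so the $\xi$-derivative correction is only $O(1)$; uniform $C^0$-convergence to the homeomorphism $F_\infty$ together with a degree (or covering) argument is the safer route and suffices for your purposes.
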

Theorem \ref{maintheorem} shows the following theorem proved by 
K. Yajima \cite{Yajima-2} as a corollary. 

\begin{theorem}[Yajima\cite{Yajima-2}]
Let $u(t,x)$ be the fundamental solution to the first equation of \eqref{LS}. 
If $1<\rho <2$, $0<b<{\rm min}(1/2,(2-\rho)/2)$
 and $\langle{\rm Hess}\,v(t,x)\cdot
 \xi,\xi\rangle \ge C(1+|x|)^{\rho-2} |\xi|^2$ then
 ${\rm sing\,supp}\,u(\pi ,\cdot)=\emptyset$.
\label{cor-1.8}
\end{theorem}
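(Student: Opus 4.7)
The plan is to deduce Theorem \ref{cor-1.8} from the equivalence (i)$\Leftrightarrow$(iii) of Theorem \ref{maintheorem} applied to the fundamental solution. Writing $u(t,x) = (U(t)\delta_y)(x)$ for fixed $y \in \re^n$, the assertion ${\rm sing\,supp}\, u(\pi, \cdot) = \emptyset$ is equivalent to $(x_0, \xi_0) \notin WF(u(\pi, \cdot))$ for every $x_0 \in \re^n$ and every $\xi_0 \in \re^n \setminus \{0\}$. I will verify condition (iii) of Theorem \ref{maintheorem} at $t_0 = \pi$ with a fixed Schwartz test function $\varphi_0$.

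Directly from the definition of the wave packet transform,
\begin{equation*}
W_{\varphi_\lambda(-\pi)}\delta_y(x, \xi) = e^{-iy\cdot\xi}\,\overline{\varphi_\lambda(-\pi, y - x)}.
\end{equation*}
The harmonic-oscillator propagator has half-period $\pi$: using the spectrum $\{n/2 + |\alpha|\}$ and the parity $\psi_\alpha(-x) = (-1)^{|\alpha|}\psi_\alpha(x)$ of the Hermite basis, one verifies $U(-\pi)\psi(x) = e^{i\pi n/2}\psi(-x)$. Consequently,
\begin{equation*}
|W_{\varphi_\lambda(-\pi)}\delta_y(x,\xi)| = \lambda^{nb/2}\bigl|\varphi_0\bigl(\lambda^b(x - y)\bigr)\bigr|.
\end{equation*}
Evaluating at $(x(0;\pi, x, \lambda\xi), \xi(0;\pi, x, \lambda\xi))$ and using that $\varphi_0 \in \calS$, it suffices to prove that $\lambda^b |x(0;\pi, x, \lambda\xi) - y|$ tends to infinity at a polynomial rate, uniformly for $x$ in a neighborhood of $x_0$ and $\xi$ in a conic neighborhood of $\xi_0$ with $a^{-1} \le |\xi| \le a$.

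The core of the argument is a classical-orbit estimate that quantifies the breakdown of focusing produced by the convex perturbation. For the pure harmonic oscillator the flow sends data $(x, \lambda\xi)$ at $s = \pi$ to $(-x, -\lambda\xi)$ at $s = 0$ exactly, which is precisely what makes the harmonic-oscillator fundamental solution singular at $t = \pi$. Treating \eqref{ODE} as a Duhamel perturbation of the harmonic-oscillator flow $x_0(s) = -x\cos s - \lambda\xi\sin s$, one derives
\begin{equation*}
x(0;\pi, x, \lambda\xi) = -x - \int_0^\pi \sin s'\, \nabla v(s', x(s'))\, ds',
\end{equation*}
and energy conservation on the shell $|\xi|^2 + |x|^2 \sim \lambda^2$, valid because $|v(s,x)| \lesssim (1+|x|)^\rho$ with $\rho < 2$, lets one substitute $x(s') \approx x_0(s') \approx -\lambda\xi\sin s'$ at leading order. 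The lower bound on $\mathrm{Hess}\,v$ then prevents cancellation in the Duhamel integral and delivers
\begin{equation*}
\bigl|\,x(0;\pi, x, \lambda\xi) + x\,\bigr| \ge c_0\,\lambda^{\rho-1}|\xi|
\end{equation*}
for $\lambda$ large and $(x,\xi)$ on the relevant compact set. Since $|\xi| \ge a^{-1}$, this gives $\lambda^b |x(0;\pi, x, \lambda\xi) - y| \ge c\,\lambda^{b+\rho-1} - C$, which diverges polynomially because $\rho > 1$. The rapid decay of $\varphi_0$ then produces \eqref{main-1} for every $N$, and Theorem \ref{maintheorem} concludes.

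The main obstacle is the orbit estimate in the displayed lower bound. The harmonic term drives the exact refocusing at time $\pi$, so the nonlinear correction $-\nabla v$ must be shown to generate a nonzero deflection of polynomial size purely from the weak convexity hypothesis. This requires uniform-in-$\lambda$ Gronwall-type control of the perturbed orbit followed by an explicit evaluation of the leading Duhamel term, using the strict positivity $\langle \mathrm{Hess}\,v \cdot \xi, \xi\rangle \ge C(1+|x|)^{\rho-2}|\xi|^2$ to guarantee that the integrand has a definite sign on the half-period $(0,\pi)$. It is precisely this mechanism that forces $\rho > 1$, since the deflection scales as $\lambda^{\rho-1}$ and is polynomially large only in that regime; this matches the range stipulated in Theorem \ref{cor-1.8}.
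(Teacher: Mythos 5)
Your proposal is correct and follows essentially the same route as the paper: reduce to condition (iii) of Theorem \ref{maintheorem} for $u_0$ a delta function, use the half-period identity $U(-\pi)\psi(x)=e^{in\pi/2}\psi(-x)$ to turn \eqref{main-1} into rapid decay of $\varphi_{0,\lambda}$ at $-x(0;\pi,x,\lambda\xi)$, and extract the deflection lower bound of order $\lambda^{\rho-1}$ from the Duhamel formula by Taylor-expanding $\nabla v$ and pairing $x(0)$ with $\xi$ so that the sign-definite term $\lambda\int_0^\pi\sin^2\tau\,\langle{\rm Hess}\,v\cdot\xi,\xi\rangle\,d\tau\gtrsim\lambda^{\rho-1}|\xi|^2$ dominates. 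This is exactly the paper's argument (with $\varphi_0$ the Gaussian and $y=0$), so the only part left implicit in your sketch --- the uniform control of the orbit and the verification that the remaining Duhamel terms are lower order --- is carried out in the paper precisely as you describe.
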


Microlocal characterization 
of the singularities of generalized functions
was studied firstly  by M.~Sato,  
J.~Bros and  D.~Iagolnitzer and L. H\"ormander 
independently around 1970. 
The notion of wave front set is introduced by L.~H\"ormander in 1970
(\cite{Hoermander-1}). He has shown that 
the wave front set of solutions to the linear hyperbolic equations
of principal type propagates along the null
bicharacteristics(\cite{Hoermander-2}). 
\par
The singularities of solutions to Schr\"odinger equations 
have been treated microlocally by  R.~Lascar \cite{Lascar-1}, 
C.~Parenti and F.~Segala \cite{P-S-1} and T.~Sakurai \cite{Sakurai-1}. 
\par
Since the Schr\"odinger operator $i\partial_t +\frac{1}{2}\triangle $
commutes $x+it\nabla $, 
the solutions to Schr\"odinger equations  become smooth for $t>0$
if the initial data decay at infinity. 
In \cite{C-K-S-1}, W.~Craig, T.~Kappeler and W.~Strauss 
have shown for solutions that 
for a point $x_0 \ne 0$ and a conic neighborhood
$ \Gamma$ of $x_0$, 
$ \langle x \rangle^{r}u_0(x) \in L^2 (\Gamma )$ implies 
$ \langle \xi \rangle^{r}\hat u(t,\xi ) \in L^2 (\Gamma' )$
for a conic neighborhood $\Gamma' $ of $x_0$ and 
for $t\ne 0$, 
though they have considered more general operators. 
Several mathematicians have studied in this direction  
(\cite{Doi-1}, \cite{Doi-2}, \cite{Nakamura-1}, 
\cite{Okaji-1}, \cite{Okaji-2}). 
\par
A.~Hassell and J.~Wunsch \cite{H-W-1} and S.~Nakamura \cite{Nakamura-2}
have determined
 the wave front set of the solution by the information of the initial
data. Hassell and Wunsch have treated the singularities as 
``scattering wave front set'' which is introduced by theirselves. 
In the case that the potential is sub-quadratic, 
Nakamura has shown
that 
for a solution $u(t,x)$, 
$(x_0,\xi_0) \notin WF(u(t, \cdot ))$ if and only if 
there exists a $C_0^\infty $ function $a(x,\xi )$ in $\re^{2n}$ 
with $a(x_0,\xi_0)\ne 0$ such that 
$\Vert a(x+tD_x,hD_x)u_0  \Vert = O(h^\infty ) 
\text{ as }h\downarrow 0. $
\par
For Schr\"odinger equations with harmonic oscillator or perturbed
harmonic oscillators, 
S. Zelditch \cite{Zelditch-1} have determined
the singular support of the fundamental solution $k(t,x,y)$
in the case that $v \equiv 0$, which shows that 
\begin{equation}
 \mathrm{sing\,  supp\, }k(t,\cdot , y) =
\begin{cases}
 \emptyset &\text{ if } t\ne m \pi \\
(-1)^m y &\text{ if } t = m \pi .
\end{cases}
\label{sing-1}
\end{equation}
L. Kapitanski, I. Rodnianski and K. Yajima \cite{K-R-Y-1} have shown that 
\eqref{sing-1} holds for $\rho <1$ and may fail for $\rho =1$. 
K. Yajima \cite{Yajima-2}
has shown that if the Hessian of $v(x)$ is positive definite, then 
$\mathrm{sing \, supp\, } k(t,\cdot , y) = \emptyset $ for $t \ne 0$. 
S. Mao and S. Nakamura \cite{Mao-Nakamura-1} have determined the wave
front sets of the solutions of \eqref{LS} in the case that $\rho <1$.
S. Mao \cite{Mao-1} have determined the wave
front sets of the solutions of \eqref{LS} in the case that $\rho <2$
 and $t\neq m\pi$ with an integer $m$.
J. Wunsch \cite{wunsch-1} has studied regularity of the solution on scattering manifold 
in the case that $\rho \le 1$.
T.~$\bar{\mathrm O}$kaji \cite{Okaji-2}
has investigated the wave front set of the
solutions for $t=m \pi$ with an integer $m$ in the case that $v\equiv
0$. 

%
%
\section{Preliminaries}
In this section, we introduce 
the definition of wave front set $WF(u)$ and 
the characterization of wave front set by G.~B.~Folland \cite{Folland-1}. 
\begin{definition}[Wave front set]
For $f\in \calS' (\re^n)$, we say 
$(x_0,\xi_0) \not\in WF(f)$
 if there exist a function $\chi (x)$ in $C_0^\infty (\re^n)$ with 
 $\chi (x_0)\equiv 1$ near $x_0$ and a conic neighborhood $\Gamma $ of
 $\xi_0$ such that for all $N \in \na $ there exists a positive constant
 $C_N$ satisfying
    \begin{align*}
    |\widehat{\chi f}(\xi )| \le C_N (1+|\xi |)^{-N} 
    \end{align*}
for all $\xi \in \Gamma $. 
\end{definition}
To prove Theorem \ref{maintheorem}, we use the following characterization 
of the wave front set by G.~B.~Folland \cite{Folland-1}.

\begin{proposition}[G.~B.~Folland {\cite[Theorem 3.22]{Folland-1}},
{T.~$\bar{\mathrm{O}}$kaji \cite[Theorem2.2]{Okaji-1}} and 
\cite{K-K-I-3}]
Let $(x_0,\xi_0)\in \r2n$, $f\in \mathcal{S}'(\re^n)$ and $0<b<1$. 
The following conditions are equivalent.
  \begin{enumerate}
  \item [(i)] $(x_0,\xi_0)\notin WF(f)$
  \item [(ii)]
There exist a neighborhood $K$ of $x_0$ and a conic neighborhood $\Gamma$ of
 $\xi_0$ such that  for all $N\in\na$, $a\ge 1$ and
 $\varphi\in \mathcal{S}(\re^n)\setminus{\{0\}}$ there exists
 a constant $C_{N,a,\varphi}>0$ satisfying 
    \begin{align*}
    |W_{\varphi_\lambda}f(x,\lambda\xi)|\le C_{N,a,\varphi}\lambda^{-N}
    \end{align*}
 for all $\lambda\ge 1$, $x\in K$ and $\xi\in\Gamma$ with 
 $a^{-1}\le |\xi|\le a$, where
 $\varphi_\lambda (x)=\lambda^{\frac{nb}{2}}\varphi (\lambda^b x)$.
   \item [(iii)]
  There exist $\varphi\in \mathcal{S}(\re^n)\setminus{\{0\}}$,
a neighborhood $K$ of $x_0$ and a conic neighborhood $\Gamma$ of
 $\xi_0$ such that  for all $N\in\na$ and $a\ge 1$ there exists
 a constant $C_{N,a}>0$ satisfying 
    \begin{align*}
    |W_{\varphi_\lambda}f(x,\lambda\xi)|\le C_{N,a}\lambda^{-N}
    \end{align*}
 for all $\lambda\ge 1$, $x\in K$ and $\xi\in\Gamma$ with 
 $a^{-1}\le |\xi|\le a$, where
 $\phi_\lambda (x)=\lambda^{\frac{n}{4}}\varphi (\lambda^{\frac{1}{2}}x)$.
\end{enumerate}
\label{folland-lemma}
\end{proposition}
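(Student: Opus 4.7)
My plan is to establish $(i)\Rightarrow(ii)$ directly, observe that $(ii)\Rightarrow(iii)$ is immediate by specializing the quantifier to a single $\varphi$, and then prove $(iii)\Rightarrow(i)$ via the inversion formula. Throughout I would exploit the key scale inequality $\lambda^{b}\ll\lambda$, which says that the frequency width $\lambda^{b}$ of the wave packet $\varphi_\lambda$ is much smaller than the scale $\lambda$ of the frequency at which we probe.

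For $(i)\Rightarrow(ii)$, suppose $(x_0,\xi_0)\notin WF(f)$ and pick $\chi\in C_0^\infty(\re^n)$ with $\chi\equiv 1$ on a neighborhood $K$ of $x_0$ and a conic neighborhood $\Gamma_0$ of $\xi_0$ such that $|\widehat{\chi f}(\eta)|\le C_N(1+|\eta|)^{-N}$ on $\Gamma_0$. Decompose $f=\chi f+(1-\chi)f$. For the first piece, Parseval in $y$ gives
\begin{equation*}
W_{\varphi_\lambda}(\chi f)(x,\lambda\xi)=(2\pi)^{-n}\int \overline{\hat\varphi_\lambda(\eta-\lambda\xi)}\, e^{ix\cdot(\eta-\lambda\xi)}\,\widehat{\chi f}(\eta)\,d\eta .
\end{equation*}
Since $\hat\varphi_\lambda(\eta-\lambda\xi)=\lambda^{-nb/2}\hat\varphi(\lambda^{-b}(\eta-\lambda\xi))$ is Schwartz-rapidly-decaying, split the integral at $|\eta-\lambda\xi|=\lambda/(2a)$: on the inner region, $\eta\in\Gamma_0$ and $|\eta|\sim\lambda$, so $\widehat{\chi f}(\eta)=O(\lambda^{-N})$; on the outer region, the bound $|\hat\varphi_\lambda(\eta-\lambda\xi)|\le C_M\lambda^{-nb/2}(\lambda^{-b}|\eta-\lambda\xi|)^{-M}$ with $|\eta-\lambda\xi|\ge\lambda/(2a)$ yields a factor $\lambda^{-M(1-b)}$, which absorbs the polynomial growth of $\widehat{\chi f}$. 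For the second piece, for $x\in K$ the function $y\mapsto \overline{\varphi_\lambda(y-x)}(1-\chi(y))e^{-iy\cdot\lambda\xi}$ lives on $\{|y-x|\ge c\}$, so Schwartz decay of $\varphi$ gives $|\varphi_\lambda(y-x)|\le C_M\lambda^{nb/2-Mb}$, and repeated differentiation (producing at worst factors of $\lambda$) combined with the continuity seminorm of $f\in\mathcal{S}'$ yields any negative power of $\lambda$.

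For $(iii)\Rightarrow(i)$, choose $\psi\in\mathcal{S}(\re^n)$ with $\langle\psi,\varphi\rangle\ne 0$ and invoke the inversion formula $(2\pi)^n\langle\psi_\lambda,\varphi_\lambda\rangle f=W_{\psi_\lambda}^{*}W_{\varphi_\lambda}f$. Pick $\chi\in C_0^\infty$ with $\chi\equiv 1$ near $x_0$ and $\supp\chi\subset K$. For $\eta=\mu\omega$ with $\omega$ in a narrow conic neighborhood of $\xi_0$, set $\lambda=\mu$ and substitute $\xi\mapsto\lambda\xi$ to obtain
\begin{equation*}
\widehat{\chi f}(\lambda\omega)=C\,\lambda^{n}\!\int\!\!\int W_{\varphi_\lambda}f(x,\lambda\xi)\,G_\lambda(x,\xi,\omega)\,dx\,d\xi,\quad G_\lambda(x,\xi,\omega)=\int \chi(y)\psi_\lambda(y-x)e^{iy\cdot\lambda(\xi-\omega)}dy .
\end{equation*}
Split the outer integral into the region $R_1=\{x\in K,\,\xi\in\Gamma,\,a^{-1}\le|\xi|\le a\}$ where hypothesis (iii) gives $|W_{\varphi_\lambda}f(x,\lambda\xi)|=O(\lambda^{-N})$, and its complement $R_2$. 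On $R_2$, either $x$ is away from $\supp\chi$, in which case the Schwartz decay of $\psi_\lambda$ makes $G_\lambda$ arbitrarily small, or $\xi$ is away from $\omega$, in which case the phase $\lambda y\cdot(\xi-\omega)$ is non-stationary and integration by parts in $y$ (using that the derivatives of $\chi(y)\psi_\lambda(y-x)$ produce at most $\lambda^{b}$ per derivative, while the phase contributes $\lambda|\xi-\omega|^{-1}\ge c\lambda$) yields $(\lambda^{b-1})^{M}$ for any $M$. A standard $L^2$ or weighted-$L^\infty$ bound on $W_{\varphi_\lambda}f$ controls the $R_2$ contribution; combining gives $|\widehat{\chi f}(\lambda\omega)|=O(\lambda^{-N})$, which is condition (i).

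The main obstacle is the $R_2$ estimate in $(iii)\Rightarrow(i)$: one must carefully track two small parameters ($\lambda^{b-1}$ from frequency-localization and $\lambda^{-b}$ from spatial-localization) against the growth of $W_{\varphi_\lambda}f$ over the unbounded region $R_2$, making sure the integration by parts produces uniform estimates in $\lambda$ and covers both spatial and frequency separations simultaneously.
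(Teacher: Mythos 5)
The paper does not prove Proposition \ref{folland-lemma}: it is quoted from Folland, $\bar{\mathrm{O}}$kaji and the authors' earlier paper \cite{K-K-I-3}, with only the remark that those references treat $b=1/2$ and that the extension to $0<b<1$ is easy. So there is no in-paper proof to compare against; judged on its own, your argument is the standard one used in those references (cut-off plus Plancherel and a frequency split for $(i)\Rightarrow(ii)$; inversion formula plus non-stationary phase for $(iii)\Rightarrow(i)$), and it is correct in outline. Two details deserve explicit care. First, in $(i)\Rightarrow(ii)$ the splitting radius $\lambda/(2a)$ only confines the direction of $\eta$ to within angle $\arcsin(1/2)$ of $\xi$, which need not land in $\Gamma_0$; you should split at $\epsilon\lambda|\xi|$ with $\epsilon$ chosen small relative to the aperture of $\Gamma_0$. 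Second, in $(iii)\Rightarrow(i)$ the dichotomy on $R_2$ (``$x$ away from $\operatorname{supp}\chi$ or $\xi$ away from $\omega$'') requires strict nesting: $\operatorname{supp}\chi$ compactly inside $K$ and the admissible directions $\omega$ confined to a cone compactly inside $\Gamma$ with $a$ taken $\ge 2$, so that on the complement of $R_1$ at least one of the two gains ($\lambda^{-Mb}$ from spatial separation, $(\lambda^{b-1}|\xi-\omega|^{-1})^{M}$ from the phase) is genuinely available; you also need the a priori polynomial bound $|W_{\varphi_\lambda}f(x,\lambda\xi)|\le C\lambda^{m}(1+|x|+\lambda|\xi|)^{m}$ from the $\mathcal{S}'$ seminorms to make the $R_2$ integral converge, which you correctly flag but should state.
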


\begin{remark}
G. B. Folland \cite{Folland-1} has shown that the conclusion follows if 
the wave packet $\varphi$ is an even and nonzero 
function in $\calS (\re^n)$ and $b=1/2$. 
In T. $\bar{\mathrm{O}}$kaji \cite{Okaji-1}, 
the proof of Proposition \ref{folland-lemma} for  $b=1/2$ is given
if $\varphi $ satisfies  $\int x^\alpha \varphi (x)dx \ne 0$ for some
$\alpha\in (\mathbb{N}\cup \{ 0\})^n$.
In \cite{K-K-I-3}, the condition $\int x^\alpha \varphi (x)dx \ne 0$
 have been removed.
Although \cite{Folland-1}, \cite{Okaji-1} and \cite{K-K-I-3} have 
proved for $b=1/2$, it is easy to extend for $0<b<1$. 
Characterization of the Sobolev type wave front set via the wave packet 
transform is also known (\cite{Folland-1}, 
P. G\'erard \cite{Gerard-1}, \cite{K-K-I-3}, \cite{Okaji-1}).
\end{remark}
%
%
\section{Transformed equations}
In this section, we give transformed equations of \eqref{LS} via 
wave packet transform. Our idea is to use a time dependent wave packet. 
When we consider a partial differential equation which is named (A) of order $1$ in time 
and of order $2$ in space such as Schr\"odinger equation, 
we can transform the equation (A)
to a partial differential equation (B) of order $1$ in space and time variables $(t,x,\xi )$ in $\re^{2n+1}$ with remainder terms 
via the wave packet transform with the suitable time dependent wave
packet. The equation (B) can be solved or be transformed to an integral
equation, by which we can study the solution of (A) (See the figure below). 
$$
\begin{CD}
 \text{P.D.E. of 2nd order(A)} 
@>W_{\varphi(t)} >>
\text{P.D.E of 1st order}+ \text{remainder}(B)
\\
@. @VV{\text{Solve}}V\\
\text{Studying sol. of (A) by sol. of (B)}@<<< \text{sol. of (B) or Integral Eq. }
\end{CD}
$$
To illustrate the idea, we give two examples. 
\par
\begin{example}[Free particle]
Consider the following initial value problems(Schr\"odinger equation of a free
 particle): 
\begin{equation}
\begin{cases}
 i \partial_t u + \frac{1}{2}\triangle u  = 0,  &(t,x) \in \re \times 
 \re^n, \\
u(0,x)  = u_0(x), & x \in \re^n.
\label{LSFP}
\end{cases}
\end{equation}
Let $\varphi (t,x)= e^{\frac{i}{2}t\triangle }\varphi_0$ with 
$\varphi_0 \in \calS (\re^n)\backslash \{0\}$. 
\eqref{LSFP} is transformed via the wave packet transform with 
the wave packet $\varphi (t,x)$ to 
\begin{equation}
\begin{cases}
(i\partial_t + i\xi\cdot \nabla_x -\frac{1}{2}|\xi |^2) 
W_{\varphi (t)}u(t,x,\xi ) =0, \\
 W_{\varphi (0)}u(0,x,\xi ) =  W_{\varphi_0}u_0(x,\xi ).
\end{cases}
\label{SFLS}
\end{equation}
Solving \eqref{SFLS}, we have 
\begin{equation}
W_{\varphi (t)}u(t,x+\xi t,\xi ) = W_{\varphi_0}u_0(x,\xi ) , 
\label{RFFP1}
\end{equation}
or
\begin{equation}
 W_{\varphi (t)}u(t,x ,\xi ) = W_{\varphi_0}u_0(x- \xi t,\xi ) . 
\label{RFFP2}
\end{equation}
\end{example}
\begin{remark}
 The representation of a solution \eqref{RFFP1} for a free particle 
is natural as physical point of view, because $(x+\xi t, \xi )$ 
is the classical orbit of a free particle starting from $(x, \xi )$ 
in the phase space $\re^{2n}$. 
\end{remark}
\begin{example}[Harmonic Oscillator]
Consider the following Schr\"odinger equation of 
a harmonic oscillator:
\begin{equation}
 \begin{cases}
 i \partial_t u
 + \frac{1}{2}\triangle u -\frac{1}{2}|x|^2 u = 0,  
&(t,x) \in \re \times 
 \re^n, \\
u(0,x)  = u_0(x), & x \in \re^n.
\end{cases}
\label{SEHO}
\end{equation}
Let $\varphi (t,x) = e^{\frac{i}{2}t(\triangle -|x|^2)}\varphi_0$
with $\varphi_0 \in \calS (\re^n)\backslash \{0\}$. 
\eqref{SEHO} is transformed via the wave packet transform with 
the wave packet $\varphi (t,x)$ to 
\begin{equation}
\begin{cases}
(i\partial_t + i\xi\cdot \nabla_x -ix\cdot \nabla_{\xi} 
-\frac{1}{2}(|\xi |^2-|x|^2)) 
W_{\varphi (t)}u(t,x,\xi ) =0, \\
 W_{\varphi (0)}u(0,x,\xi ) =  W_{\varphi_0}u_0(x,\xi ).
\end{cases}
\label{WFTHOC}
\end{equation}
 Solving this first order partial differential equation \eqref{WFTHOC}, 
we have 
$$
W_{\varphi (t)}u(t,x,\xi ) = 
e^{-\frac{i}{2}\int_{0}^{t}(|\xi(t-s)|^2-|x(t-s)|^2)ds}
W_{\varphi_0}u_0(x(t),\xi(t)),
$$
where 
$$
\begin{cases}
 x(t)&= x \cos t - \xi \sin t, \\
 \xi (t) & = \xi \cos t +x \sin t .
\end{cases}
$$
\label{ex-2}
\end{example}
For the Schr\"odinger equation \eqref{LS}, we use the Taylor expansion
$$
v(t,y)= v(t, x+ (y-x))=v(t,x)+(y-x)\cdot \nabla_x v(t,x) + r_L(t,x,y),
$$
with
    \begin{multline*}
    r_L (t,x,y)=\sum_{2\le |\alpha|\le L-1}
           \dfrac{\partial^\alpha_x v(t,x)}{\alpha !}(y-x)^\alpha\\
           +L\sum_{|\alpha |= L}\dfrac{(y-x)^\alpha}{\alpha !}\int_0^1
            \partial^\alpha_x v(t,x+\theta (y-x))(1-\theta)^{L-1}d\theta , 
    \end{multline*}
by which the initial value problem \eqref{LS} is transformed via the wave
 packet transform with the wave packet generated by 
 $\varphi_\lambda (t,x)$ to
   \begin{equation}
     \begin{cases}
      & \left(i\partial_t + i\xi\cdot \nabla_x -i(x+\nabla_x v(t,x) )
        \cdot\nabla_{\xi} -\frac{1}{2}|\xi |^2-\widetilde{V}(t,x)\right)\\
      &\phantom{xxxxxxxxxxxxxxxxxx}
      \times W_{\varphi_\lambda (t)}u(t,x,\xi )
        =R_L[\varphi_\lambda ,u](t,x,\xi ),\\
      &W_{\varphi_{\lambda}(0)}u(0,x,\xi ) =
        W_{\varphi_{0,\lambda}}u_0(x,\xi ),
     \end{cases}
     \label{WPEQP}
    \end{equation}
 where $\widetilde{V}(t,x)= -\frac{1}{2}|x|^2+ v(t,x)- \nabla_x v(t,x)\cdot x$
 and 
    \begin{align*}
     R_L[\varphi_\lambda,u](t,x,\xi )
       = \int \overline{\varphi_\lambda (t, y-x)}\, 
     r_L(t,x,y) \,u(t,y)e^{-iy\cdot\xi}dy
     \end{align*}
 (for the deduction of \eqref{WPEQP}, see \cite{K-K-I-1}).
By the method of characteristics, we have the integral equation 
    \begin{multline}
    \label{Int-eq-Sch}
     W_{\varphi_\lambda (t)}u(t,x,\xi ) \\
     = 
     e^{-i\int_{0}^{t}\{\frac{1}{2}|\xi (s;t,x,\xi )|^2+
     \widetilde{V}(s,x(s;t,x,\xi))\}ds}
     W_{\varphi_{0,\lambda}}u_0(x(0;t,x,\xi),\xi (0;t,x,\xi))\\
    -i\int_{0}^{t}
    e^{-i\int_{s}^{t}\{\frac{1}{2}|\xi
    (s_1;t,x,\xi)|^2+\widetilde{V}(s_1,x(s_1;t,x,\xi))\}ds_1}
    R_L[\varphi_\lambda,u](s,x(s;t,x,\xi),\xi (s;t,x,\xi))ds, 
    \end{multline}
where $x(s;t,x,\xi)$ and $\xi (s;t,x,\xi)$ be the solutions of 
    \begin{equation*}
     \begin{cases}
     \dot{x}(s)= \xi (s), &x(t)=x, \\
     \dot{\xi}(s)= -x(s)-\nabla v(s,x(s)), 
     &\xi (t)= \xi .
     \end{cases}
    \end{equation*}
We use the above integral equation for the proof of Theorem \ref{maintheorem}
in Section 5. 
%
%
\section{Key Lemmas}
In this section, we assume that $v(t,x)\in C^\infty (\re\times \re^n)$
 satisfies Assumption \ref{ass-V}, $x(s)=x(s;t_0,x,\lambda\xi)$ and
 $\xi (s)=\xi (s;t_0,x,\lambda\xi)$ are the solutions to  \eqref{ODE}.
Let $0<b< {\rm min}( 1/2, (2-\rho)/2)$, 
$\varphi_0(x)\in \calS (\re^n)\setminus\{ 0\}$,
 $K$ be a neighborhood of $x_0$ and $\Gamma$ be a
 conic neighborhood of $\xi_0$. 
 $\varphi_{0,\lambda}(x)$ denotes
 $\lambda^{nb/2}\varphi_0(\lambda^bx)$ and 
 $\varphi_{\lambda}(t,x)$ denotes
 $U(t)\varphi_{0,\lambda} (x)=e^{i(t/2)(\triangle -|x|^2)} 
 \varphi_{0,\lambda} (x)$. 
To prove Theorem \ref{maintheorem}, we prepare the following two lemmas.

\begin{lemma}
\label{lem-4.1}
 Let $u(t,x)\in C(\re ; L^2(\re^n))$,
 $\alpha$ be a multi-indices with $|\alpha |\ge 2$ and 
    \begin{equation}
    R_1u(t,x,\xi)=\int \overline{\varphi_\lambda (t-t_0,y-x)}\,
    \dfrac{\partial^\alpha_x v(t,x)}{\alpha !} \, (y-x)^\alpha\,  
    u(t,y)e^{-iy\cdot \xi}dy.
    \end{equation}
 Assume that for $\sigma \ge 0$, $a\ge 1$ and
 $\varphi_0\in\mathcal{S}(\re^n)\setminus \{ 0\}$
there exists $C_{\sigma, a,\varphi_0}>0$ satisfying
    \begin{equation}
    \label{lemma1_assumption}
    |W_{\varphi_{\lambda}(s-t_0)}u(s,x(s), \xi (s))| 
     \le C_{\sigma,a,\varphi_0}\lambda^{-\sigma}
    \end{equation}
 for all $x\in K$, $\xi\in\Gamma$ with $a^{-1}\le |\xi|\le a$, $\lambda\ge 1$
 and $0\le s\le t_0$.
Then there exist 
$C_{\alpha, \sigma, a,\varphi_0}>0$  satisfying
    \begin{equation}
    \label{lemma1_conclusion}
    | R_1u(s,x(s),\xi (s))|
    \le C_{\alpha, \sigma ,a,\varphi_{0}}\lambda^{-\sigma -\delta}
    \end{equation}
 for $x\in K$, $\xi\in\Gamma$ with $a^{-1}\le |\xi|\le a$, 
$\lambda\ge 1$, $0\le s\le t_0$
and 
$\delta ={\rm min}(2-2b-\rho, 2b)$.
\end{lemma}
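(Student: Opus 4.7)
The plan is to rewrite $R_1u(s,x(s),\xi(s))$ as the product of the classical prefactor $\partial^\alpha_x v(s,x(s))/\alpha!$ and a wave packet transform of $u$ with a modified seed, then to decompose that seed via a Heisenberg-type identity for the harmonic oscillator propagator so that hypothesis~\eqref{lemma1_assumption} can be invoked. Since $(y-x)^\alpha$ is real,
\[
R_1u(s,x,\xi)=\frac{\partial^\alpha_x v(s,x)}{\alpha!}\,W_{z^\alpha\varphi_\lambda(s-t_0)}u(s,x,\xi).
\]
Using $\varphi_\lambda(t,\cdot)=U(t)\varphi_{0,\lambda}$ with $U(t)=e^{i(t/2)(\triangle-|x|^2)}$, the operator identity $y\,U(t)=U(t)\bigl(\cos t\cdot y+\sin t\cdot(-i\nabla_y)\bigr)$ iterated $\alpha$ times gives
\[
z^\alpha\varphi_\lambda(s-t_0,z)=\bigl[U(s-t_0)\bigl(\cos(s-t_0)\,y+\sin(s-t_0)(-i\nabla_y)\bigr)^\alpha\varphi_{0,\lambda}\bigr](z).
\]
Expanding the operator polynomial in normal-ordered form and using the scaling identity
\[
y^{\alpha-\beta}(-i\nabla_y)^\beta\varphi_{0,\lambda}=(-i)^{|\beta|}\lambda^{b(2|\beta|-|\alpha|)}\Phi^{(\beta)}_{0,\lambda},\qquad \Phi^{(\beta)}(y):=y^{\alpha-\beta}\partial^\beta\varphi_0(y)\in\calS(\re^n),
\]
yields the decomposition
\[
z^\alpha\varphi_\lambda(s-t_0,\cdot)=\sum_{\beta\le\alpha}c_{\alpha,\beta}(s-t_0)\,\lambda^{b(2|\beta|-|\alpha|)}\,\Phi^{(\beta)}_\lambda(s-t_0,\cdot)+(\text{lower-order commutator terms}),
\]
where $\Phi^{(\beta)}_\lambda(t,\cdot):=U(t)\Phi^{(\beta)}_{0,\lambda}$ and $|c_{\alpha,\beta}(s-t_0)|\lesssim|\sin(s-t_0)|^{|\beta|}$.

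Applying hypothesis~\eqref{lemma1_assumption} with $\varphi_0$ replaced by each nonvanishing $\Phi^{(\beta)}$ yields $|W_{\Phi^{(\beta)}_\lambda(s-t_0)}u(s,x(s),\xi(s))|\le C_\beta\lambda^{-\sigma}$, so
\[
|W_{z^\alpha\varphi_\lambda(s-t_0)}u(s,x(s),\xi(s))|\lesssim\sum_{\beta\le\alpha}|\sin(s-t_0)|^{|\beta|}\lambda^{b(2|\beta|-|\alpha|)-\sigma}.
\]
For the prefactor, Assumption~\ref{ass-V} gives $|\partial^\alpha_xv(s,x(s))|\le C(1+|x(s)|)^{\rho-|\alpha|}$. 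A Gr\"onwall argument applied to~\eqref{ODE}, exploiting $|\nabla v|\lesssim(1+|x|)^{\rho-1}$ with $\rho<2$, shows that the perturbed orbit stays within $O(\lambda^{\rho-1})=o(\lambda)$ of the harmonic orbit $x\cos(s-t_0)+\lambda\xi\sin(s-t_0)$, so $|x(s)|\ge c\lambda|\sin(s-t_0)|-C$ and hence $(1+|x(s)|)^{\rho-|\alpha|}\lesssim(1+\lambda|\sin(s-t_0)|)^{\rho-|\alpha|}$ for $\lambda$ large.

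Combining and setting $u=\lambda|\sin(s-t_0)|\in[0,\lambda]$, the $\beta$-th contribution to $R_1u$ is controlled by
\[
\lambda^{|\beta|(2b-1)-b|\alpha|-\sigma}\cdot\frac{u^{|\beta|}}{(1+u)^{|\alpha|-\rho}}.
\]
Since $b<1/2$ and $|\alpha|\ge 2$, the two extremes govern the overall rate: $\beta=0$ (bounded $u$-factor) yields $\lambda^{-b|\alpha|-\sigma}\le\lambda^{-2b-\sigma}$, while $\beta=\alpha$ (the supremum of the fraction on $[0,\lambda]$ is $\sim\lambda^\rho$) yields $\lambda^{\rho-|\alpha|(1-b)-\sigma}\le\lambda^{-(2-2b-\rho)-\sigma}$; intermediate $\beta$ interpolate and are no worse, and the lower-order commutator terms (of degree $k<|\alpha|$) carry extra trigonometric factors $|\sin(s-t_0)|^{(|\alpha|-k)/2}$ that guarantee strictly faster decay. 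Taking the minimum gives $\delta=\min(2b,2-2b-\rho)$ as claimed. The main technical obstacle is the meticulous bookkeeping of the Heisenberg expansion (normal-ordered monomials, commutator corrections, trigonometric prefactors, and $\lambda$-scalings tracked simultaneously) together with the trajectory comparison; the key insight is that the two components of $\delta$ are precisely the optimal rates achievable by the all-position ($\beta=0$) and all-momentum ($\beta=\alpha$) monomials in the decomposition.
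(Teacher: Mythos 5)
Your proposal follows essentially the same route as the paper: commute the monomial $(y-x(s))^\alpha$ through the propagator $U(s-t_0)$ via the Heisenberg relation $x_jU(t)=U(t)(x_j\cos t-i\sin t\,\partial_{x_j})$, track the resulting trigonometric factors and $\lambda^{b(\cdot)}$ scalings, apply the hypothesis \eqref{lemma1_assumption} to the derived seeds (the paper's $\partial^\theta_x x^\mu\cdot\partial^{\theta'}_x\varphi_0$, your $\Phi^{(\beta)}$ plus commutator remainders), and balance the factor $(1+|x(s)|)^{\rho-|\alpha|}$ against $|\sin(s-t_0)|^{|\beta|}$ using the asymptotics of the classical orbit. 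Your single optimization over $u=\lambda|\sin(s-t_0)|\in[0,\lambda]$ is just a repackaging of the paper's two cases, and your identification of $\beta=0$ and $\beta=\alpha$ as the two extremal rates is exactly how $\delta=\min(2b,\,2-2b-\rho)$ arises there.

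One step is stated too strongly and, as written, fails for $1<\rho<2$. The Duhamel correction in \eqref{x(s)} is only $O(\lambda^{\rho-1})$ (you say so yourself), not $O(1)$, so the lower bound is $|x(s)|\ge c\lambda|\sin(s-t_0)|-C\lambda^{\rho-1}$, not $-C$; consequently the comparison $(1+|x(s)|)^{\rho-|\alpha|}\lesssim(1+\lambda|\sin(s-t_0)|)^{\rho-|\alpha|}$ is \emph{not} uniform in $s$: in the window $1\lesssim\lambda|\sin(s-t_0)|\lesssim\lambda^{\rho-1}$ the left side can be of order $1$ while the right side is a negative power of $\lambda$. The paper avoids this by proving the two-sided orbit estimate \eqref{ode-est} only for $|\sin(s-t_0)|\ge C\lambda^{-2b}$ (which dominates $\lambda^{\rho-2}$ precisely because $2b<2-\rho$), and by using the trivial bound $(1+|x(s)|)^{\rho-|\alpha|}\le 1$ in the complementary regime $|\sin(s-t_0)|\le\lambda^{-2b}$, where $|\sin(s-t_0)|^{|\beta|}\le\lambda^{-2b|\beta|}$ alone already yields $\lambda^{-\sigma-2b}$. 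Your endpoint computation survives this repair verbatim, so the gap is local and fixable, but the case split (or an equivalent restriction of where the orbit comparison is invoked) must be made explicit.
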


\begin{proof}
It suffices to prove the inequality \eqref{lemma1_conclusion} for sufficiently large $\lambda$
 and we may assume without loss of generality that $0<t_0\le \pi$. 
Since  $x_jU(t)=U(t) (x_j\cos t - i\sin t\, \partial_{x_j})$, we have
    \begin{align}
    \label{lemma1_1}
    &(y-x(s))^\alpha 
    \varphi_\lambda (s-t_0, y-x(s))\notag\\
    =\, &U(t) \left[
    (x\cos t - i\sin t\, \partial_x)^\alpha \varphi_{0, \lambda }(x)
    \right]\big|_{t=s-t_0, x=y-x(s)}\notag\\
    = &\sum_{\substack{\mu+\mu'=\alpha\\ \theta+\theta'= \mu'}}\!\!
       C_{\mu,\theta, \theta'} 
       \lambda^{b(|\mu' |-|\mu|)} \{\cos (s-t_0)\}^{|\mu|}
       \{\sin (s-t_0)\}^{|\mu' |}
       \varphi^{(\mu,\theta,\theta')}_{\lambda}(s-t_0,y-x(s)),
    \end{align}
 where 
 $\varphi^{(\mu,\theta,\theta')}_\lambda (t,x)=
 U(t)\big[(\partial^\theta_x x^\mu\cdot \partial^{\theta'}_x
 \varphi_0)_\lambda\big](x)$.
The above equality \eqref{lemma1_1}, the assumption \eqref{lemma1_assumption} in Lemma \ref{lem-4.1} and
Assumption \ref{ass-V} yiled that
    \begin{align*}
     |R_1u(s,x(s),\xi (s))|
    \le \sum_{\substack{\mu+\mu'=\alpha\\ \theta+\theta'= \mu'}}
     C_{\alpha,\sigma,a,\varphi_0}
     \, \lambda^{b(|\mu'|-|\mu|)-\sigma}
     \,(1+|x(s)|)^{\rho -|\alpha |}\left|\, \sin (s-t_0)\right|^{|\mu' |}
    \end{align*}
for $x\in K$, $\xi\in\Gamma$ with $a^{-1}\le |\xi|\le a$, $\lambda\ge 1$
 and $0\le s\le t_0$.
\par
The solutions $x(s)$ and $\xi (s)$ of \eqref{ODE} satisfy
    \begin{multline}
    \label{x(s)}
    \begin{pmatrix}
     x(s)\\
     \xi (s)
    \end{pmatrix}
 = \begin{pmatrix}
   \cos (s-t_0) & \sin (s-t_0) \\
   -\sin (s-t_0) & \cos (s-t_0)
  \end{pmatrix}
 \begin{pmatrix}
 x\\ \lambda \xi
 \end{pmatrix}
\\
 + \int_{t_0}^s
 \begin{pmatrix}
   \cos (s-\tau) & \sin (s-\tau ) \\
   -\sin (s-\tau ) & \cos (s-\tau ) 
  \end{pmatrix}
 \begin{pmatrix}
 0\\ - \nabla v (\tau, x(\tau ))
 \end{pmatrix}
 d\tau, 
 \end{multline}
which shows that 
 there exists $\lambda_0 \ge 1$ such that 
    \begin{equation}
    C_a\lambda  |\sin (s-t_0)| \le 
    |x(s)| \le C'_a\lambda  |\sin (s-t_0)| 
    \label{ode-est}
    \end{equation}
 for $x\in K$, $\xi\in\Gamma$ with $a^{-1}\le |\xi|\le a$ and 
 $\lambda\ge \lambda_0$
if $-\pi +\lambda^{-2b}\le s-t_0 \le -\lambda^{-2b}$. 
Here we use the fact $C\lambda^{-2b}\le |\sin (s-t_0)| \le 1$ 
and $0<b< {\rm min}( 1/2, (2-\rho)/2)$. 
Thus, we have by $|\alpha|\ge 2$
\begin{align}
\label{Lemma4.1_case1}
    |R_1u(s,x(s),\xi (s))|
    &\le \sum_{\substack{\mu+\mu'=\alpha\\ \theta+\theta'= \mu'}}
    C_{\alpha,\sigma,a,\varphi_0}
    \lambda^{b(|\mu' |-|\mu |)-\sigma}
    \dfrac{|\sin (s-t_0)|^{|\mu'|}}{(1+\lambda|\sin (s-t_0)|)^{|\alpha|-\rho}}\notag\\
    &\le \sum_{\substack{\mu+\mu'=\alpha\\ \theta+\theta'= \mu'}}
    C_{\alpha,\sigma,a,\varphi_0}
    \lambda^{b(|\mu' |-|\mu |)-\sigma+\rho -|\alpha|}\,
    |\sin (s-t_0)|^{|\mu'|+\rho-|\alpha|}\notag\\
    &\le C_{\alpha,\sigma,a,\varphi_0}\lambda^{b|\alpha|-\sigma+\rho-|\alpha|}
    \notag\\
   &\le C_{\alpha,\sigma,a,\varphi_0}\lambda^{-\sigma-(2-2b-\rho)}
    \end{align}
 for $x\in K$, $\xi\in\Gamma$ with $a^{-1}\le |\xi|\le a$ and 
 $\lambda\ge \lambda_0$.

On the other hand, if $-\pi \le s-t_0\le -\pi +\lambda^{-2b}$ or
 $-\lambda^{-2b}\le s-t_0\le 0$
 then $|\sin (s-t_0)|\le \lambda^{-2b}$ and, hence, we have, by
 $|\mu|+|\mu'|=|\alpha |\ge 2$,
    \begin{align}
\label{Lemma4.1_case2}
    |R_1u(s,x(s),\xi (s))|
   &\le \sum_{\substack{\mu+\mu'=\alpha\\ \theta+\theta'= \mu'}}
     C_{\alpha,\sigma,a,\varphi_0}
     \, \lambda^{b(|\mu'|-|\mu|)-\sigma}
\times \lambda^{-2b|\mu'|}\notag\\
     &\le C_{\alpha,\sigma,a,\varphi_0} \, \lambda^{-\sigma-2b}.
    \end{align}
From \eqref{Lemma4.1_case1} and \eqref{Lemma4.1_case2},
 we obtain the desired result.
\end{proof}

\medskip

\begin{lemma}
Let $u(t,x)\in C(\re ; L^2(\re^n))$,
 $\alpha$ be a multi-indices with $|\alpha |=L$ and 
   \begin{multline*}
    R_2u(t,x,\xi)\\
     =\int \overline{\varphi_\lambda (t-t_0,y-x)}\,
      \int_0^1 \partial^\alpha_x v(t,x+\theta (y-x))(1-\theta)^{L-1}
      d\theta\, (y-x)^\alpha u(t,y)e^{-iy\cdot \xi}dy .
    \end{multline*}
Then for all $a\ge 1$ and $N>0$ there exists $C_{a,\alpha,\varphi_0}>0$
 satisfying
    \begin{equation}
    \label{lemma2_conclusion}
    | R_2u(s,x(s),\xi (s))|
    \le C_{\alpha,a,\varphi_{0}}\lambda^{-N}
    \end{equation}
 for $x\in K$, $\xi\in\Gamma$ with $a^{-1}\le |\xi|\le a$, $\lambda\ge 1$
 and $0\le s\le t_0$ 
if we take $L$ sufficiently large. 
\end{lemma}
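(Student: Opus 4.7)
The plan is to bound $R_2 u$ via Cauchy--Schwarz in the $y$ variable and then split the analysis into the same two regimes for $|\sin(s-t_0)|$ used in the proof of Lemma~\ref{lem-4.1}. Writing $z = y - x(s)$ and using $\|u(s,\cdot)\|_{L^2}\le M$ uniformly on $[0,t_0]$, Cauchy--Schwarz yields
\[
|R_2 u(s, x(s), \xi(s))| \le C_M \int_0^1 (1-\theta)^{L-1} \bigl\|z^\alpha \varphi_\lambda(s-t_0, z)\,\partial^\alpha_x v(s, x(s)+\theta z)\bigr\|_{L^2(dz)}\, d\theta,
\]
so the task is to bound the inner norm by $C\lambda^{-N}$ uniformly in $\theta\in[0,1]$ once $L=|\alpha|$ is chosen large. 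The main workhorse is the identity $x_j U(t) = U(t)(x_j\cos t - i\sin t\,\partial_{x_j})$ together with unitarity of $U(t)$ on $L^2(\re^n)$ and the scaling $\|x^\gamma \partial^\delta \varphi_{0,\lambda}\|_{L^2} = \lambda^{b(|\delta|-|\gamma|)}\|x^\gamma \partial^\delta \varphi_0\|_{L^2}$, which together give for every multi-index $\beta$
\[
\|z^\beta \varphi_\lambda(s-t_0, \cdot)\|_{L^2} \le C_\beta \lambda^{-b|\beta|}\max_{|\delta|\le|\beta|}\bigl(|\sin(s-t_0)|\lambda^{2b}\bigr)^{|\delta|}.
\]

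In the good region $|\sin(s-t_0)| \ge \lambda^{-2b}$, I would apply Peetre's inequality $(1+|x(s)+\theta z|)^{\rho-L} \le C(1+|x(s)|)^{\rho-L}(1+|z|)^{L-\rho}$ together with Assumption~\ref{ass-V} to control $|\partial^\alpha_x v(s, x(s)+\theta z)|$. The lower bound $|x(s)|\ge C_a\lambda|\sin(s-t_0)| \ge C_a\lambda^{1-2b}$ from \eqref{ode-est} provides $(1+|x(s)|)^{\rho-L}\le C(\lambda|\sin(s-t_0)|)^{\rho-L}$, while the expansion above bounds $\|(1+|z|)^{2L-\rho}\varphi_\lambda(s-t_0, z)\|_{L^2}$ by $C\max(1,(|\sin(s-t_0)|\lambda^b)^{2L-\rho})$. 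A short calculation then yields the inner norm $\le C\lambda^{-L(1-2b)+\rho(1-b)}$ when $|\sin(s-t_0)|\ge\lambda^{-b}$ and $\le C\lambda^{-(1-2b)(L-\rho)}$ when $\lambda^{-2b}\le |\sin(s-t_0)|<\lambda^{-b}$. In the bad region $|\sin(s-t_0)|<\lambda^{-2b}$ no useful lower bound on $|x(s)|$ is available, so I would not use Peetre; instead, the trivial bound $|\partial^\alpha_x v|\le C$ (valid since $\rho-L<0$) suffices, because with $|\beta|=L$ in the expansion each summand $(|\sin(s-t_0)|\lambda^{2b})^{|\delta|}\lambda^{-bL}$ is bounded by $\lambda^{-bL}$ in this regime, giving $\|z^\alpha\varphi_\lambda(s-t_0,\cdot)\|_{L^2}\le C\lambda^{-bL}$ and hence $|R_2 u|\le C\lambda^{-bL}$.

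Combining the three estimates, choosing $L$ sufficiently large makes $|R_2 u(s,x(s),\xi(s))|\le C\lambda^{-N}$ for all $s\in[0,t_0]$, which is the desired conclusion. The main obstacle is the careful accounting in the good region, where Peetre's inequality and the $U(t)$-identity expansion each contribute competing powers of $|\sin(s-t_0)|$ and $\lambda$; the hypothesis $b<1/2$ is essential at this step, since it is what forces $1-2b>0$ and hence $(1-2b)(L-\rho)\to\infty$ as $L\to\infty$, without which the good-region estimate would not close.
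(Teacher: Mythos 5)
Your argument is correct, but it is genuinely different from the one in the paper. The paper does not use Cauchy--Schwarz directly on the defining integral of $R_2u$: it first rewrites $u(s,y)$ via the inversion formula for the wave packet transform, inserts a partition of unity $\psi_1+\psi_2$ in the variable $\lambda^{\delta}|y-x(s)|/(1+\lambda^{2b+4\delta}|\sin (s-t_0)|)$, integrates by parts in $y$ using $(1-\triangle_y)e^{iy\cdot(\xi-\eta)}=(1+|\xi-\eta|^2)e^{iy\cdot(\xi-\eta)}$ to gain integrability in $\eta$, and then treats the near region (where $|y-x(s)|^L$ is small by the support condition and $\partial^\alpha v$ decays via \eqref{ode-est}) and the far region (where high moments $|y-x(s)|^M|\partial^{\alpha_4}\varphi_\lambda|$ beat the polynomial $|y-x(s)|^{L}$) separately, with two auxiliary large parameters $L$ and $M$ and an auxiliary small $\delta$. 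Your route replaces all of this with a single Cauchy--Schwarz against $\|u(s,\cdot)\|_{L^2}$, Peetre's inequality $(1+|x(s)+\theta z|)^{\rho-L}\le C(1+|x(s)|)^{\rho-L}(1+|z|)^{L-\rho}$, and the weighted $L^2$ moment bound for $\varphi_\lambda$ coming from $x_jU(t)=U(t)(x_j\cos t-i\sin t\,\partial_{x_j})$; the case split on $|\sin(s-t_0)|$ versus $\lambda^{-2b}$ and $\lambda^{-b}$ then mirrors the structure of Lemma \ref{lem-4.1} rather than of the paper's Lemma 4.2. I checked your exponents: in the region $|\sin(s-t_0)|\ge\lambda^{-b}$ the product $(\lambda|\sin|)^{\rho-L}(\lambda^b|\sin|)^{2L-\rho}=\lambda^{-L(1-2b)+\rho(1-b)}|\sin|^{L}$ is as you claim, the intermediate region gives $\lambda^{-(1-2b)(L-\rho)}$, and the region $|\sin(s-t_0)|<\lambda^{-2b}$ gives $\lambda^{-bL}$; all three exponents tend to $-\infty$ as $L\to\infty$ precisely because $0<b<1/2$, so the conclusion follows. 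Your approach is more elementary and shorter, essentially because for this remainder term only the $L^2$ bound on $u$ is ever used (the paper's $\|W_{\varphi_\lambda(s-t_0)}u\|_{L^2_{z,\eta}}$ is equivalent to $\|u\|_{L^2}$ up to a constant), so the inversion formula and the $y$-integration by parts buy nothing here; the paper's scheme has the merit of being uniform in style with its treatment of $R_1$ and of $I_{\alpha,1}$, $I_{\alpha,2}$. The only loose ends in your write-up are cosmetic: the non-integer weight $(1+|z|)^{2L-\rho}$ should be rounded up to an even integer power before applying the moment identity, and the restriction to $\lambda\ge\lambda_0$ in \eqref{ode-est} is harmless since the range $1\le\lambda\le\lambda_0$ is covered by the trivial Cauchy--Schwarz bound.
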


\begin{proof}
We assume that $0<t_0\le \pi$ and 
we prove \eqref{lemma2_conclusion} for  sufficiently large $\lambda$. 
By the inversion formula of the wave packet transform, we have
  \begin{multline*}
  R_2 u(s,x(s), \xi(s)) 
     = C_{\varphi_0}
        \iiint v_\alpha (s,x(s),y) (y-x(s))^\alpha 
        \overline{\varphi_\lambda (s-t_0, y-x(s))} \\
     \times \varphi_\lambda (s-t_0 , y-z) W_{\varphi_\lambda (s-t_0)}
         u(s, z, \eta ) e^{-iy\cdot (\xi (s) -\eta )} dy dz d\eta ,
  \end{multline*}
 where $v_\alpha (s,x(s),y)= \int_{0}^{1}\partial_x^\alpha 
 v(s,x(s)+\theta (y-x(s))) (1-\theta)^{L-1} d\theta $.
Let $\delta$ be a positive constant satisfying 
$b+4\delta <{\rm min}(1/2,(2-\rho)/2)$ and $2\delta\le b$
and let
$\psi_1$ and $\psi_2$ be $C^\infty $ functions on $\re$ satisfying 
    \begin{align*}
    &  \psi_1(s) = 
    \begin{cases}
    1& \quad \text{for } s\le 1, \\
    0& \quad \text{for } s \ge 2, 
    \end{cases}
    \quad 
    \psi_2(s)  =
    \begin{cases}
    0& \quad \text{for } s\le 1, \\
    1 & \quad \text{for } s \ge 2, 
    \end{cases}
    \end{align*}
 and $\psi_1(s) + \psi _2(s) =1$ for all $s\in \re$. 
Putting 
    \begin{multline*}
     I_{\alpha,j} (s,x(s), \xi (s), \lambda ) \\
     =\iiint
    \psi_j \left(
    \frac{\lambda^{\delta}|y-x(s)|}{1+\lambda^{2b+4\delta} |\sin (s-t_0)|} 
\right)
    v_\alpha (s,x(s),y) (y-x(s))^\alpha\\
     \times \overline{\varphi_\lambda (s-t_0, y-x(s))}
     \varphi_\lambda (s-t_0 , y-z) W_{\varphi_\lambda (s-t_0)}
     u(s, z, \eta ) e^{-iy\cdot (\xi (s) -\eta )} dy dz d\eta
    \end{multline*}
 for $j=1,2$, we have 
    \begin{equation}
     R_2 u(s,x(s),\xi (s)) =
     C_{\varphi_0}\sum_{j=1}^{2}
     I_{\alpha,j} (s,x(s), \xi (s), \lambda ). 
    \end{equation}
Integration by parts and the fact  
 $(1-\triangle_y)e^{iy\cdot (\xi -\eta )} = (1+|\xi -\eta|^2)
 e^{iy\cdot (\xi -\eta)}$
 yield that 
 \begin{align}
    \label{I_alpha_j}
    & |I_{\alpha, j} (s,x(s), \xi (s), \lambda ) |\notag\\
    & \quad \le \sum_{|\alpha_1|+ \cdots +|\alpha_5|\le 2n} 
        \iiint \frac{C_{\alpha_i}}{
        (1+|\xi (s)-\eta|^2)^{n}}
        \bigg| \partial^{\alpha_1}_y \left\{ \psi_j\left(
    \frac{\lambda^{\delta}|y-x(s)|}{1+\lambda^{2b+4\delta} |\sin (s-t_0)|} 
\right) \right\}
       \bigg| \notag\\
       &\qquad \times  
        \big| \partial^{\alpha_2}_y\{ v_\alpha (s,x(s),y)\} \big|
        \times \big| \partial^{\alpha_3}_y  ((y-x(s))^\alpha) \big|
        \times \big| \partial^{\alpha_4}_y \varphi_\lambda (s-t_0,y-x(s))
        \big| \notag\\    
       &\qquad  \times \big| \partial^{\alpha_5}_y \varphi_\lambda
       (s-t_0, y-z)\Big|
       \times |W_{\varphi_\lambda (s-t_0)}u(s,z,\eta )| \,dy dz d\eta. 
 \end{align}
\par

First we estimate $I_{\alpha, 1}$. 
Simple calculation yields with some positive integer $l_1$ that
     \begin{align}
\label{I1-1}
       \left\| \partial^{\alpha_1}_y \left\{
       \psi_1 \left( 
    \frac{\lambda^{\delta}|y-x(s)|}{1+\lambda^{2b+4\delta} |\sin (s-t_0)|} 
       \right) \right\}\right\|_{L^2_y} 
\le C\lambda^{l_1}. 
    \end{align}
The fact that  $\partial_{x_j}U(t)= U(t)(\cos t\,\partial_{x_j}-ix_j\sin t)$
 for $1\le j\le n$  yields 
     \begin{align*}
     \partial_{x}^{\alpha} \varphi_\lambda (t,x)
     = &\,U(t) \Big[        
                (\cos t\, \partial_{x} - ix\sin t)^{\alpha} 
        \varphi_{0,\lambda} (x)\Big] \notag\\
    = & \sum_{\substack{\nu+\nu'=\alpha\\\tau+\tau'=\nu'}}
         C_{\alpha_k}(\sin t)^{|\nu|}(\cos t)^{|\nu'|}
         \lambda^{(|\nu'|-|\nu|)b}
      |U(t)( \partial^\tau_x
     x^\nu \cdot \partial^{\tau'}_x \varphi_0)_\lambda (x)|, 
     \end{align*}
which shows that
\begin{align}
\label{I1-2}
 \|(\partial^{\alpha}_y \varphi_\lambda) (s-t_0, y )\|_{L^2_y}
 \le C_{\alpha} \lambda^{b|\alpha|}\le C_{\alpha_k}\lambda^{2bn}
\end{align}
 for multi-indice $\alpha $ with $|\alpha |\le 2n$. 
 Since $C \lambda^{-(2b+4\delta)}\le |\sin (s-t_0)|\le 1$
for 
$-\pi +\lambda^{-(2b+4\delta)}\le s-t_0\le -\lambda^{-(2b+4\delta)}$, 
\eqref{ode-est} yields that
for sufficiently large $\lambda$
\begin{align}
\label{I1-3}
    &|\partial^{\alpha_2}_y\{ v_\alpha (s,x(s),y)\}|
        \cdot |\partial^{\alpha_3}_y  ((y-x(s))^\alpha) |\notag\\
    &\le 
     \int_0^1 \theta^{|\alpha_2|}(1-\theta)^{L-1}
     (1+|x(s)+\theta (y-x(s))|)^{\rho -L-|\alpha_2|}     
     d\theta \cdot C_{\alpha,\alpha_3}|y-x(s)|^{L-|\alpha_3|}\notag\\
    &\le C_{\alpha,\alpha_2,\alpha_3}\, 
     (1+ |x(s)|-|y-x(s)|)^{\rho -L-|\alpha_2|}\cdot 
     \{ \lambda^{-\delta}(1+\lambda^{2b+4\delta} |\sin (s-t_0)|)
      \}^{L-|\alpha_3|}\notag\\
    & \le C_{a,\alpha,\alpha_2,\alpha_3}\, 
      \dfrac{(1+\lambda^{2b+4\delta} |\sin (s-t_0)|)^L}{
     (1+\lambda |\sin (s-t_0)|)^{L-\rho}}\cdot \lambda^{-\delta L} \notag\\
    &\le C'_{a,\alpha,\alpha_2,\alpha_3}\,  
\lambda^{\rho}\cdot \lambda^{-\delta L}, 
\end{align}
in the support of $\psi_1 \left(
 \frac{\lambda^{\delta}|y-x(s)|}{1+\lambda^{2b+4\delta} |\sin
 (s-t_0)|} 
\right)$.
\par
Since $|\sin (s-t_0)| \le \lambda^{-(2b+4\delta)}$
for  
$-\pi \le s-t_0 \le -\pi + \lambda^{-(2b+4\delta)}$ or 
 $-\lambda^{-(2b+4\delta)}\le s-t_0\le 0$, 
we have 
    \begin{align}
     \label{I1-4}
    |\partial^{\alpha_2}_y\{ v_\alpha (s,x(s),y)\}|
        \cdot |\partial^{\alpha_3}_y  ((y-x(s))^\alpha) |
    &\le  C_{\alpha,\alpha_2,\alpha_3}\, 
          |y-x(s)|^{L-|\alpha_3|}\notag\\
    &\le C'_{\alpha,\alpha_2,\alpha_3}\, 
      \{\lambda^{-\delta}(1+\lambda^{2b+4\delta}
      |\sin (s-t_0)|)\}^{L-|\alpha_3|}\notag\\
    &\le C''_{\alpha,\alpha_2,\alpha_3}\,  
    \lambda^{-\delta L} 
    \end{align}
in the support of 
$\psi_1 \left(
 \frac{\lambda^{\delta}|y-x(s)|}{1+\lambda^{2b+4\delta} |\sin
 (s-t_0)|} 
\right)$.
\par
Hence we have 
by \eqref{I_alpha_j}, \eqref{I1-1}, \eqref{I1-2},
\eqref{I1-3}, \eqref{I1-4} and Schwarz's inequality, 
    \begin{equation}
 \label{I1-5}
      |I_{\alpha,1} (s,x(s),\xi(s),\lambda)|
     \le C_{a,\alpha,\varphi_0}
      \lambda^{-N}
    \end{equation}
for all $N>0$, if we take $L$ and $\lambda$ sufficiently large.

Next we estimate $I_{\alpha,2}$. 
From $|\alpha|=L$, $|\alpha_j|\le 2n \, (j=1,2,3)$
 and Assumption\ref{ass-V}, we have with 
a positive number $l_2$ number depending only on $n$ 
    \begin{align*}
    &\bigg| \partial^{\alpha_1}_y\psi_2\left( 
\frac{\lambda^{\delta}|y-x(s)|}{1+\lambda^{2b+4\delta} |\sin (s-t_0)|}
\right)
       \bigg|\le C_{\alpha_1}\lambda^{l_2},\\[1mm]
    &\big| \partial^{\alpha_2}_y\{ v_\alpha (s,x(s),y)\} |\le
     C_{\alpha_2}\quad {\text{and}}\quad
    |\partial_y^{\alpha_3} ((y-x(s))^\alpha) | \le C_{\alpha,\alpha_3}
     |y-x(s)|^{L-|\alpha_3|}. 
    \end{align*}
The fact $\partial_{x_j}U(t)= U(t)(\cos t\,\partial_{x_j}-ix_j\sin t)$
 and $x_j U(t)= U(t)(-i\sin t\, \partial_{x_j}+x_j\cos t)$ for $1\le j\le n$ 
 yield for an even integer $M$ that
    \begin{align*}
      |x|^{M} \partial_{x}^\alpha \varphi_\lambda (t,x)
     = &\sum_{|\beta |=M} C_\beta\,  U(t) \Big[        
        (\cos t\,x -i\sin t\, \partial_x)^{\beta}
        (\cos t\, \partial_{x} - i\sin t\, x)^\alpha 
        \varphi_{0,\lambda} (x)\Big] \\
    = &\sum_{|\beta|=M} 
         \sum_{\substack{\nu+\nu'=\alpha\\ \tau+\tau'= \nu'}}
         \sum_{\substack{\mu+\mu'=\beta\\ \theta+\theta'=\mu'}} 
         C_{\alpha,\beta}(\sin t)^{|\nu| +|\mu'|}(\cos t)^{|\nu'|+|\mu|}
         \lambda^{b(|\nu'|+|\mu'|-|\nu|-|\mu|)}\\
     &\phantom{xxxxxxxxxxxxxxxx}  \times
      U(t)(( \partial^\theta_x x^\mu \cdot \partial^{\theta'}_x(\partial^\tau_x
 x^\nu \cdot \partial^{\tau'}_x \varphi_0))_\lambda (x), 
    \end{align*}
which shows that
    \begin{multline*}
    |y-x(s)|^{M} |\partial_{x}^{\alpha_4} \varphi_\lambda (s-t_0,y-x(s))| \\
      \le \sum_{|\beta|= M} 
          \sum_{\substack{\nu+\nu'=\alpha_4\\ \tau+\tau'= \nu'}}
\sum_{\substack{\mu+\mu'=\beta\\ \theta+\theta'=\mu'}}     
          C_{\alpha_4,\beta}
          |\sin (s-t_0)|^{|\nu|+|\mu'|}
          \lambda^{b(|\nu'|-|\nu|+|\mu'|-|\mu|)}\\
     \times
           |U(s-t_0)( \partial^\theta_x x^\mu \cdot \partial^{\theta'}_x(\partial^\tau_x
 x^\nu \cdot \partial^{\tau'}_x \varphi_0))_\lambda (y-x(s))|
    \end{multline*}
 and
    \begin{align*}
    |\partial_{x}^{\alpha_5} \varphi_\lambda (s-t_0,y-z)|
     \le \sum_{\substack{\omega+\omega'=\alpha_5\\ \gamma +\gamma'=\omega'}}
          C_{\alpha_5} 
          \lambda^{(|\omega'|-|\omega|)b}
          |U(s-t_0)(\partial^\gamma_x x^\omega \cdot 
          \partial^{\gamma'}_x \varphi_0)_\lambda (y-z)|.
    \end{align*}
Hence we have by \eqref{I_alpha_j} and  Schwarz's inequality that 
    \begin{align}
\label{I2-1}
    &  |I_{\alpha,2} (s,x(s),\xi(s),\lambda)|\notag\\
    & \le  \sum_{\substack{|\alpha_1|+\cdots +|\alpha_5|\le 2n\\ |\beta|= M}}
      ~\sum_{\substack{\nu+\nu'=\alpha_4\\ \omega+\omega'=\alpha_5\\ \mu+\mu'=\beta}}
      ~\sum_{\substack{\tau+\tau'=\nu'\\ \theta+\theta'=\mu'\\ \gamma+\gamma'=\omega'}}  
          C_{\alpha_i,\beta}\, 
          \lambda^{l_3+b(|\mu'|-|\mu|)} 
          \|W_{\varphi_\lambda (s-t_0)}u(s,z,\eta ) \|_{L^2_{z,\eta}}\notag\\
    &\phantom{xxx}\times    
           \|U(s-t_0)(\partial^\gamma_x x^{\omega}\cdot \partial^{\gamma'}_x 
                         \varphi_0)_\lambda (z)\|_{L^2_z}
      \|\langle \xi-\eta \rangle^{-2n}\|_{L^2} \int_D |y-x(s)|^{L-M-|\alpha_3|}\notag\\
    &\phantom{xxx}
     \times |\sin (s-t_0)|^{|\mu'|+|\nu'|}|
U(s-t_0)( \partial^\theta_x x^\mu \cdot \partial^{\theta'}_x(\partial^\tau_x
 x^\nu \cdot \partial^{\tau'}_x \varphi_0))_\lambda (y-x(s))| dy,
   \end{align}
where  
$D=\{ y\in \re^n\,|\, |y-x(s)|\ge  
\lambda^{-\delta}(1+ \lambda^{2b+4\delta}|\sin (s-t_0)|) \}$
 and $l_3$  is a positive number.
Since $0<C\lambda^{-(b+2\delta)}\le |\sin (s-t_0)|\le 1$ for 
$-\pi +\lambda^{-(b+2\delta)}\le s-t_0 \le -\lambda^{-(b+2\delta)}$, 
we have for sufficiently large $M$ that 
    \begin{align}
    \label{I2-2}
    &|y-x(s)|^{L-M-|\alpha_3|} 
      \lambda^{b(|\mu'|-|\mu|)}
      |\sin (s-t_0)|^{|\mu'|+|\nu'|}\notag\\
    &\quad \le \dfrac{1}{(1+|y-x(s)|)^{n}}\cdot 
      \dfrac{\lambda^{b(|\mu'|-|\mu|)}\lambda^{\delta n}}{
              |y-x(s)|^{M-L+|\alpha_3|-n}}\notag\\
    &\quad \le \dfrac{1}{(1+|y-x(s)|)^{n}}\cdot 
      \dfrac{\lambda^{b(|\mu'|-|\mu|)}\lambda^{\delta n}}{
              \{ \lambda^{-\delta}(1+ \lambda^{2b+4\delta}
        |\sin (s-t_0)|)\}^{M-L+|\alpha_3|-n}}\notag\\
    &\quad \le \dfrac{1}{(1+|y-x(s)|)^{n}}\cdot 
      \dfrac{\lambda^{l_4}\cdot\lambda^{(b+\delta)L}}{\lambda^{\delta M}}
    \end{align}
in $D$, 
where $l_4$ is a positive number which is independent of $L$ and $M$.
Since $|\sin (s-t_0)|\le \lambda^{-(b+2\delta)}$ 
for $-\pi\le s-t_0\le -\pi+\lambda^{-(b+2\delta)}$ or
 $- \lambda ^{-(b+2\delta)}\le s-t_0\le 0$,
we have for sufficiently large $M$ that 
    \begin{align}
\label{I2-3}
    &|y-x(s)|^{L-M-|\alpha_3|} 
      \lambda^{b(|\mu'|-|\mu|)}
      |\sin (s-t_0)|^{|\mu'|+|\nu'|}\notag\\
    &\quad \le \dfrac{1}{(1+|y-x(s)|)^{n}}\cdot 
      \dfrac{\lambda^{b(|\mu'|-|\mu|)}\lambda^{\delta n}}{
              |y-x(s)|^{M-L+|\alpha_3|-n}}\cdot \lambda^{-(b+2\delta)(|\mu'|+|\nu'|)}\notag\\
    &\quad \le \dfrac{1}{(1+|y-x(s)|)^{n}}\cdot 
      \dfrac{\lambda^{b(|\mu'|-|\mu|)}\lambda^{\delta n}}{
              \{ \lambda^{-\delta}(1+ \lambda^{2b+4\delta}
        |\sin (s-t_0)|)\}^{M-L+|\alpha_3|-n}}
    \cdot \lambda^{-(b+2\delta)(|\mu'|+|\nu'|)}\notag\\
    &\quad \le \dfrac{1}{(1+|y-x(s)|)^{n}}\cdot\dfrac{\lambda^{l_5}}{\lambda^{\delta M+\delta L}} 
    \end{align}
in $D$, where $l_5$ is a positive number which is independent of $L$ and $M$.
Hence \eqref{I2-3}, \eqref{I2-1}, \eqref{I2-2} and Scwartz's inequality shows that
    \begin{equation}
   \label{I2-3}
    | I_{\alpha, 2} (s,x(s), \xi (s), \lambda ) | \le
    C_{\alpha,\varphi_0} \lambda ^{-N}
    \end{equation}
for any $N>0$, if we take $L$ and $M$ sufficiently large. 
\par
\eqref{I1-5} and \eqref{I2-3} completes the proof. 
\end{proof}
 \section{Proof of Theorem \ref{maintheorem} and Corollaries}
\begin{proof}[\bf Proof of Theorem \ref{maintheorem}]
 We only show that (iii) implies (i).
Because it is trivially valid that (ii) implies (iii) and we can show that (i) implies (ii) in the same way. 
We may assume without loss of generality that $0<t_0\le\pi$.
Let $x(s;t,x,\xi)$ and $\xi (s;t,x,\xi)$ be the solutions of 
    \begin{equation*}
     \begin{cases}
     \dot{x}(s)= \xi (s), &x(t)=x, \\
     \dot{\xi}(s)= -x(s)-\nabla v(s,x(s)), 
     &\xi (t)= \xi .
     \end{cases}
    \end{equation*}
It suffices to show that the following assertion 
 $P(\sigma )$ holds for all $\sigma \ge 0$
under the condition of (iii).

\bigskip

\noindent $P(\sigma )$: 
``There exists a positive constant $C_{\sigma, a, \varphi_0}$ such that 
    \begin{equation}
    |W_{\varphi_\lambda (t-t_0)}
    u(t, x(t;t_0,x,\lambda \xi),\xi (t;t_0,x,\lambda \xi) )| \le
    C_{\sigma ,a, \varphi_0}\lambda ^{-\sigma}
    \label{ineq-6}
    \end{equation}
 \qquad \quad \qquad
 for all $\lambda \ge 1$, $x\in K$, $\xi\in\Gamma$ with
 $a^{-1} \le |\xi |\le a$ and $0 \le t \le t_0$. ''

\bigskip
 
\noindent In fact, taking $t=t_0$, we have 
 $\varphi_\lambda{(t_0-t_0)}=\varphi_{0,\lambda}$, 
 $x(t_0;t_0,x,\lambda \xi)= x$ and 
 $\xi (t_0;t_0,x,\lambda \xi)=\lambda\xi$. 
Hence from \eqref{ineq-6}, we have immediately
    $$|W_{\varphi_{0,\lambda}} u(t_0, x,\lambda\xi )| \le
      C_{\sigma ,a, \varphi_0}\lambda ^{-\sigma}$$
 for all $\lambda \ge 1$, $x\in K$ and $\xi\in\Gamma$ with
 $a^{-1} \le |\xi |\le a$.
This and Proposition \ref{folland-lemma} show 
 $(x_0,\xi_0)\notin WF(u(t_0,\cdot))$.
\par
We show by induction with respect to $\sigma$ that 
 $P(\sigma )$ holds for all $\sigma \ge 0$.

First we show that $P(0)$ holds.
Since $u_0(x) \in L^2(\re^n)$ and $u(t,x) \in C(\re ; L^2(\re^n))$,
Schwarz's inequality and the conservation of $L^2$ norm of solutions of
 \eqref{LS} show that 
    \begin{align*}
    &\left| W_{\varphi_\lambda(t-t_0) }
            u(t,x(t;t_0,x,\lambda\xi),\xi(t;t_0,x,\lambda\xi))\right|\\
    &\quad \le \int_{\re^n} 
           |\varphi_\lambda (t-t_0, y-x(t;t_0,x,\lambda\xi))|\,|u(t,y)|dy \\
    &\quad  \le \Vert \varphi_\lambda (t-t_0, \cdot\, )\Vert_{L^2}
    \Vert u(t, \cdot\, )\Vert_{L^2} \\
    &\quad = \| \varphi_0 \|_{L^2} \| u_0 \|_{L^2}.
    \end{align*}
Hence $P(0)$ holds. 

Next, we show that $P(\sigma +\delta)$ holds for 
$\delta ={\rm min}(2-2b-\rho ,2b)$
under the assumption that $P(\sigma)$ holds. 

Substituting $(x(t;t_0,x, \lambda \xi ), \xi (t;t_0,x, \lambda \xi ))$ 
 and $\varphi_\lambda (-t_0, x)$ for $(x,\xi )$ and
 $\varphi_{0,\lambda} (x)$ respectively in \eqref{Int-eq-Sch} and 
taking the absolute value of \eqref{Int-eq-Sch}, we have 
    \begin{align*}
     &\left| W_{\varphi_{\lambda }(t-t_0)}
     u(t,x(t;t_0,x,\lambda \xi ),\xi (t;t_0,x ,\lambda \xi ) )\right| \\ 
     &\qquad \le 
      \left| W_{\varphi_{\lambda }(-t_0)}u_0(x(0;t_0,x, \lambda \xi),
     \xi (0;t_0,x, \lambda \xi ))\right|\\
     &\qquad \qquad +\left| \int_{0}^{t}
     \left| R_L[\varphi_\lambda (\,\cdot -t_0,\cdot),u]
     (s,x(s;t_0,x, \lambda \xi ),
     \xi (s;t_0,x,\lambda \xi )) \right| ds\right| .
    \end{align*}
Here, we use the fact that 
   \begin{align*}
   &x(s;t,x(t;t_0,x,\lambda \xi ), \xi(t;t_0,x,\lambda \xi))
   =x(s;t_0, x,\lambda \xi ), \\
   &\xi (s;t,x(t;t_0,x,\lambda \xi ), \xi(t;t_0,x,\lambda \xi))
   =\xi (s;t_0, x,\lambda \xi )
   \end{align*}
 and $e^{\frac{i}{2}t(\triangle -|x|^2)} \varphi_{\lambda}(-t_0, x)
 = \varphi_{\lambda }(t-t_0,x)$. 
Since 
    \begin{equation}
    |W_{\varphi_\lambda (-t_0)}
    u_0(x(0;t_0,x,\lambda \xi),\xi (0;t_0,x,\lambda \xi) )| \le
    C_{\sigma,a, \varphi_0}\lambda ^{-(\sigma +\delta)},
    \end{equation}
 it suffices to show that
  there exists a positive constant 
 $C_{\sigma, a, \varphi_0}$ such that 
    \begin{equation}
     |R_L[\varphi_\lambda (\,\cdot -t_0,\cdot),u]
     (s,x(s;t_0,x,\lambda \xi), \xi (s;t_0,x,\lambda \xi))|
    \le C_{\sigma,a,\varphi_0}\lambda^{-(\sigma + \delta)}
    \label{ineq-7}
    \end{equation}
 for all $\lambda \ge 1$, $x\in K$, $\xi\in\Gamma$ with
 $a^{-1} \le |\xi |\le a$ and $0 \le s \le t_0$.
We divide $R_L$ into two parts:
    \begin{align*}
     R_L[\varphi_\lambda (\,\cdot -t_0,\cdot),u](t,x,\xi ) = 
     R_1u(t,x,\xi )+R_2u(t,x,\xi ),
    \end{align*}
 where 
    \begin{align*}
    & R_1u(t,x,\xi)=\int \overline{\varphi_{\lambda} (t-t_0, y-x)}\, 
     \sum_{2\le |\alpha|\le L-1}
     \dfrac{\partial^\alpha_x v(t,x)}{\alpha !}(y-x)^\alpha
     \,u(t,y)e^{-i\xi\cdot  y}dy,\\
    & R_2u(t,x,\xi)\\
      &=\int \overline{\varphi_{\lambda} (t-t_0, y-x)}\, 
     \sum_{|\alpha |= L}\dfrac{L}{\alpha !}\int_0^1
        \partial^\alpha_x v(t,x+\theta (y-x))(1-\theta)^{L-1}d\theta
     \, (y-x)^\alpha u(t,y)e^{-i\xi\cdot y}dy.
\end{align*}
From Lemma 4.1 and Lemma 4.2, if we take $L$ sufficiently large, we have 
    \begin{equation}
    |R_1u(s,x(s;t_0,x,\lambda \xi), \xi (s;t_0,x,\lambda \xi))|
    \le C_{\sigma,a,\varphi_0}\lambda^{-(\sigma + \delta)}
    \end{equation}
and 
    \begin{equation}
    |R_2u(s,x(s;t_0,x,\lambda \xi), \xi (s;t_0,x,\lambda \xi))|
    \le C_{\sigma,a,\varphi_0}\lambda^{-(\sigma + \delta)}
    \end{equation}
  for $\lambda \ge 1$, $x\in K$, $\xi\in\Gamma$ with
 $a^{-1} \le |\xi |\le a$ and $0 \le s \le t_0$, respectively.
Hence, we obtain the desired result.
\end{proof}


 
\begin{proof}[\bf Proof of Corollary \ref{cor-1.6}]
Since all the first derivatives of $v(t,x)$ with respect to space variables decay at infinity, 
we use the following transformed initial value problem of \eqref{LS}
instead of using \eqref{WPEQP}, 
   \begin{equation}
     \begin{cases}
      & \left(i\partial_t + i\xi\cdot \nabla_x -ix\cdot\nabla_{\xi} 
-\frac{1}{2}|\xi |^2-v(t,x)\right)\\
      &\phantom{xxxxxxxxxxxxxxxxxx}
      \times W_{\varphi_\lambda (t)}u(t,x,\xi )
        =R_L[\varphi_\lambda ,u](t,x,\xi ),\\
      &W_{\varphi_{\lambda}(0)}u(0,x,\xi ) =
        W_{\varphi_{0,\lambda}}u_0(x,\xi ),
     \end{cases}
\label{WPEQP2}
    \end{equation}
where 
    \begin{align*}
     R_L[\varphi_\lambda,u](t,x,\xi )
       = \int \overline{\varphi_\lambda (t, y-x)}\, 
     r_L(t,x,y) \,u(t,y)e^{-iy\cdot\xi}dy
     \end{align*}
and
    \begin{multline*}
    r_L (t,x,y)=\sum_{1\le |\alpha|\le L-1}
           \dfrac{\partial^\alpha_x v(t,x)}{\alpha !}(y-x)^\alpha\\
           +L\sum_{|\alpha |= L}\dfrac{(y-x)^\alpha}{\alpha !}\int_0^1
            \partial^\alpha_x v(t,x+\theta (y-x))(1-\theta)^{L-1}d\theta .
    \end{multline*}
By the method of characteristics, we have the integral equation 
    \begin{align*}
    \label{Int-eq-Sch-2}
     &W_{\varphi_\lambda (t)}u(t,x,\xi ) \\
     &= 
     e^{-i\int_{0}^{t}\{\frac{1}{2}|\xi (s;t_0,x,\xi )|^2+
     v(s,x(s;t_0,x,\xi))\}ds}
     W_{\varphi_{0,\lambda}}u_0(x(0;t_0,x,\xi),\xi (0;t_0,x,\xi))\\
   &\quad -i\int_{0}^{t}
    e^{-i\int_{s}^{t}\{\frac{1}{2}|\xi
    (s_1;t_0,x,\xi)|^2+v(s_1,x(s_1;t_0,x,\xi))\}ds_1}
    R_L[\varphi_\lambda,u](s,x(s;t_0,x,\xi),\xi (s;t_0,x,\xi))ds, 
    \end{align*}
where $x(s;t_0,x,\xi ) = x \cos (s-t_0) +  \xi \sin (s-t_0)$, 
$\xi (s;t_0,x,\xi ) = - x \sin (s-t_0) + \xi  \cos (s-t_0)$. 
\par
Since $|\partial_x^\alpha v(t,x)| \le C (1+|x|)^{\rho -1}$ for 
$|\alpha | =1$ and $\rho -1 <0$, 
the first term of $r_L$ can be easily handled by the same argument 
in the proof of Theorem \ref{maintheorem}, which 
completes the proof. 
\end{proof}

\begin{proof}[\bf Proof of Corollary \ref{cor-1.7}]
Let $x(s)=x(s;\pi,x,\lambda\xi)$,
 $\xi (s)=(s;\pi,x,\lambda\xi)$ be the solutions to 
    \begin{equation}
    \label{ODE-2}
    \begin{cases}
     \dot{x}(s) = \xi (s), \quad &x(\pi) = x, \\
     \dot{\xi}(s) =-x(s)-\nabla v(s,x(s)), \quad &\xi (\pi) = \lambda \xi .
    \end{cases}
    \end{equation}
Putting $\varphi_0=e^{-|x|^2/2}$, the eigenfunction expansion of 
$\varphi_{0,\lambda}$ yields 
    \begin{align}
    \label{eigen}
     \varphi_{\lambda}(-\pi,x)
     =\{e^{i(t/2)(\triangle -|x|^2)}\varphi_{0,\lambda}(x)\}\big|_{t=-\pi}
     =e^{in\pi/2}\,\varphi_{0,\lambda}(x), 
    \end{align}
since $\varphi_{0,\lambda}$ is a radially symmetric function. 
Thus Theorem \ref{maintheorem} shows that if
  $(x_0,\xi_0)\notin WF(u(\pi,\cdot ))$ then
 there exist a neighborhood $K$ of $x_0$ and a conic neighborhood 
 $\Gamma$ of $\xi_0$ such that for all $N\in \na$ and  for all $a\ge 1$
 there exists a constant $C_{N,a, \varphi_0}>0$ satisfying 
    \begin{equation}
    \label{main-1}
     |W_{\varphi_{0,\lambda}}u_0
     (x(0;\pi,x,\lambda\xi) ,\xi (0;\pi,x,\lambda\xi) )| 
    \le C_{N,a,\varphi_0} \lambda^{-N}
    \end{equation}
 for all $\lambda \ge 1$, $x\in K$ and $\xi\in\Gamma$ with
 $a^{-1} \le |\xi |\le a$.
From the assumption of $v(t,x)$ and \eqref{x(s)},  we have 
    \begin{align*}
     x(0;\pi,x,\lambda\xi) 
     &=-x-\int_0^\pi
     \sin \tau \,\nabla v(\tau,x(\tau ;\pi,x,\lambda
     \xi))d\tau\\
     &=-x-\int_0^\pi
     \sin \tau \{ v_0(\tau,x(\tau ;\pi,x,\lambda\xi))+\tilde{v}
     (\tau,x(\tau ;\pi,x,\lambda\xi)\}d\tau .
    \end{align*}
Simple calculation yields that
    \begin{align*}
     \lim_{\lambda\to\infty}\int_0^\pi
      \sin \tau \cdot v_0(\tau,x(\tau ;\pi,x,\lambda\xi ))d\tau
   =\int^\pi_0 \sin\tau \cdot v_0(\tau,-\xi\sin \tau) d\tau 
\end{align*}
 and
    \begin{align*}
    \lim_{\lambda\to\infty}\int_0^\pi
    \sin \tau\cdot\tilde{v}(\tau,x(\tau ;\pi,x,\lambda\xi ))d\tau =0.
    \end{align*}
Thus, we have
    \begin{align*}
    \lim_{\lambda\to\infty}x(0;\pi,x,\lambda\xi ) = -x -\int^\pi_0
    \sin\tau \cdot v_0(\tau,-\xi\sin \tau) d\tau .
    \end{align*}
On the other hand, it holds that
    \begin{align*}
     \lim_{\lambda\to\infty}
     \dfrac{\xi(0;\pi,x,\lambda\xi)}{\lambda}=-\xi .
    \end{align*}
Hence there exist a neighborhood $K'$ of 
 $-x_0+\int^\pi_0\sin\tau \cdot v_0(\tau,-\xi_0\sin \tau) d\tau$
 and conic neighborhood $\Gamma'$ of $-\xi_0$
 such that for all $N\in \na$ and  for all $a\ge 1$
 there exists a constant $C_{N,a, \varphi_0}>0$ satisfying 
    \begin{equation}
     |W_{\varphi_{0,\lambda}}u_0(x ,\lambda \xi ) )| 
    \le C_{N,a,\varphi_0} \lambda^{-N}
    \end{equation}
 for $\lambda \ge 1$, $x\in K'$ and $\xi\in \Gamma'$ with
 $a^{-1} \le |\xi |\le a$.
From Proposition 2.2, we have the conclusion.
 \end{proof}

\begin{proof}[\bf Proof of Theorem \ref{cor-1.8}]
Fix $(x_0,\xi_0)\in\re^n\times\re^n\setminus \{ 0\}$ and put
 $\varphi_0(x)=e^{-|x|^2/2}$.
In terms of Theorem \ref{maintheorem}, it suffices to show that
 there exist neighborhood $K$ of $x_0$ and conic neighborhood $\Gamma$
 of $\xi_0$ such that for all $N\in \na$ and  for all $a\ge 1$,
 there exists a constant $C_{N,a, \varphi_0}>0$ satisfying 
    \begin{equation}
    \label{Co_siki_1}
    |W_{\varphi_{\lambda}(-\pi )}\,u_0 (x(0;\pi,x,\lambda\xi ) ,\xi
    (0;\pi,x,\lambda\xi ) )|
      \le C_{N,a,\varphi_0} \lambda^{-N}
    \end{equation}
 for $\lambda \ge 1$, $x\in K$ and $\xi\in\Gamma$ with
 $a^{-1} \le |\xi |\le a$,
 where $x(s)=x(s;\pi,x,\lambda\xi)$, $\xi (s)=\xi (s;\pi,x,\lambda\xi)$
 be the solutions to \eqref{ODE-2}.
Since $u_0(x)= \delta (x)$, \eqref{Co_siki_1} is equivalent to
\begin{equation}
   | \varphi_{0,\lambda}(- x(0;\pi,x,\lambda\xi ) )|
      \le C_{N,a,\varphi_0} \lambda^{-N},\label{164319_4Sep19} 
\end{equation}
here we use the fact \eqref{eigen}.
We have by the successive approximation method, 
    \begin{align}
    x(s) &=-x\cos s -\lambda\xi\sin s-\int_\pi^s \sin (s-\tau )
          \nabla v (\tau,x(\tau ))d\tau \label{cor1.8_1}\\
         &=-x\cos s -\lambda\xi\sin s +o(\lambda )\quad (\lambda\to
          \infty). \label{cor1.8_2}
    \end{align}
Since 
    \begin{align}
    \nabla v(\tau,x(\tau))
    =\nabla v(\tau,0)+\int_0^1 {\rm Hess}\,v(\tau,\theta x(\tau)) d\theta
    \cdot x(\tau),
    \end{align}
 we have
    \begin{multline*}
    \langle x(0),\xi\rangle
  =-\langle x,\xi\rangle
    -\int^\pi_0 \sin\tau
    \left\langle \nabla v (\tau,0),\xi \right\rangle d\tau\\
   +\int^\pi_0 \!\!\int_0^1 \sin\tau \left\langle
    {\rm Hess}\, v(\tau,\theta x(\tau))
   \Big( x\cos\tau +\int_\pi^\tau\!\! \sin (\tau-\eta )
     \nabla v (\eta,x(\eta ))d\eta \Big) ,\xi \right\rangle d\theta d\tau\\
  +\lambda\int^\pi_0\!\!\int_0^1  \sin^2\tau 
  \left\langle {\rm Hess}\,v(\tau,\theta x(\tau))\cdot \xi
  ,\xi  \right\rangle d\theta d\tau .
   \end{multline*}
 From the assumption, it follows that
 \begin{align*}
  \lambda\int^\pi_0\!\!\int_0^1 & \sin^2\tau  
  \left\langle  {\rm Hess}\,v(\tau,\theta x(\tau))\cdot \xi
  ,\xi  \right\rangle d\theta d\tau\\
  &\ge C\lambda |\xi|^2\int^\pi_0\int_0^1 \sin^2\tau\,
  (1+\theta |x(\tau)|)^{\rho-2} d\theta d\tau \\
  &=C\lambda^{\rho -1} |\xi|^2\int^\pi_0\int_0^1 \sin^2\tau\,
  \left( \frac{1}{\lambda} +\frac{\theta |x(\tau)|}{\lambda}
  \right)^{\rho-2} d\theta d\tau  = O(\lambda^{\rho-1}) \quad
  (\lambda\rightarrow \infty)
 \end{align*}
 for $1< \rho <2$ and $\xi \ne 0$. Thus, we have
  \begin{align}
   \label{cor1.8_3}
 |x(0;\pi,x,\lambda\xi)| = O(\lambda^{\rho -1}) \text{ as }\lambda \to \infty. 
  \end{align}
\eqref{cor1.8_3} implies \eqref{164319_4Sep19}
since  $\varphi_{0}$ is a Schwartz's rapidly decreasing function.
Thus, we obtain the desired result.
\end{proof}

%
%

\end{document}